\newcounter{theorem}
\newtheorem{thm}[theorem]{Theorem}
\newtheorem{lemma}[theorem]{Lemma}
\newtheorem{prop}[theorem]{Proposition}
\newtheorem{cor}[theorem]{Corollary}
\newtheorem{defn}[theorem]{Definition}
\theoremstyle{remark}
\newtheorem*{remark*}{Remark}
\numberwithin{equation}{section}
\numberwithin{theorem}{section}
\newcommand{\e}{\epsilon}
\newcommand{\dl}{\delta}
\newcommand{\R}{\mathbb{R}}
\renewcommand{\setminus}{\backslash}
\newcommand{\K}{\mathcal{K}}
\newcommand{\tens}{\otimes}
\newcommand{\dsum}{\oplus}
\newcommand{\bigdsum}{\bigoplus}
\newcommand{\dunion}{\amalg}
\newcommand{\Cu}{\mathcal{C}u}
\newcommand{\labelledthing}[2]{\hspace{4pt}\buildrel {#2} \over #1 \hspace{3pt}} 
\newcommand{\labelledleftarrow}{\labelledthing{\longleftarrow}}
\newcommand{\labelledrightarrow}{\labelledthing{\longrightarrow}}
\newcommand{\ccite}[2]{\cite[#1]{#2}}
\newcommand{\alabel}{\label}
\newcommand{\eb}{\partial_e}
\newcommand{\Lsc}{\mathrm{Lsc}}
\newcommand{\LAff}{\mathrm{LAff}}
\newcommand{\Aff}{\mathrm{Aff}}
\newcommand{\Prim}{\mathrm{Prim}}
\newcommand{\rod}{\mathrm{rod}}
\newcommand{\rank}{\mathrm{Rank}\,}
\title[On the structure of the Cuntz semigroup]{On the structure of the Cuntz semigroup in (possibly) nonunital $\mathrm{C}^*$-algebras}
\author{Aaron Tikuisis}
\address{\hskip-\parindent
Aaron Tikuisis, Mathematisches Institut der WWU M\"unster, Einsteinstra\ss{}e 62, 48149, M\"unster, Germany.}
\email{a.tikuisis@uni-muenster.de}
\author{Andrew Toms}
\address{\hskip-\parindent
Andrew Toms, Department of Mathematics, Purdue University, 150 North University Street, West Lafayette, IN 47907, USA.}
\email{atoms@purdue.edu}
\thanks{
The first-named author was supported by DFG (SFB 878).
The second-named author was supported by NSF grant DMS-0969246 and the 2011 AMS Centennial Fellowship.
The first-named author would also like to thank the Department of Mathematics at Purdue University for hosting him, during which time part of this article was written.
}
\keywords{Nuclear $\mathrm C^*$-algebras; Cuntz semigroup; dimension functions; stably projectionless $C^*$-algebras; approximately subhomogeneous $C^*$-algebras; slow dimension growth}
\subjclass[2010]{46L35, 46L80, 46L05, 47L40, 46L85}
\begin{document}

\begin{abstract}
We examine the ranks of operators in semi-finite $\mathrm{C}^*$-algebras as measured by their densely defined lower semicontinuous traces.  We first prove that a unital simple $\mathrm{C}^*$-algebra whose extreme tracial boundary is nonempty and finite contains positive operators of every possible rank, independent of the property of strict comparison.  We then turn to nonunital simple algebras and establish criteria that imply that the Cuntz semigroup is recovered functorially from the Murray-von Neumann semigroup and the space of densely defined lower semicontinuous traces.  Finally, we prove that these criteria are satisfied by not-necessarily-unital approximately subhomogeneous algebras of slow dimension growth.
\end{abstract}

\maketitle

\section{Introduction}
It has recently become apparent that the question of which ranks, suitably defined, can occur in a simple and stably finite $\mathrm{C}^*$-algebra has considerable bearing on the deeper structure of the algebra.  The most significant example is due to Winter, who uses the notion of approximate divisibility of ranks in such algebras as an essential ingredient in his proof of $\mathcal{Z}$-stability for a wide class of nuclear $\mathrm{C}^*$-algebras \cite{Winter:pure}.  $\mathcal{Z}$-stability, in turn, is by now an indispensable tool in the effort to classify simple separable amenable $\mathrm{C}^*$-algebras via $\mathrm{K}$-theoretic data.

Several articles have appeared which concern ranks of operators in unital $\mathrm{C}^*$-algebras (\cite{BrownPereraToms,DadarlatToms:ranks,PereraToms:recasting,Toms:rigidity}); much of that work required further assumptions on the comparability of positive operators in the sense of Cuntz.  Here, we pursue two lines of research.  On the one hand, we prove that a unital simple $\mathrm{C}^*$-algebra with finitely many extreme tracial states contains positive operators of every possible rank, regardless of separability/nuclearity/comparability of positive operators.  We also begin to treat the nonunital and potentially stably projectionless cases, particularly nonunital approximately subhomogeneous algebras.  We begin by defining a measure of how closely a linear strictly positive function on the trace space of an algebra can be approximated by the rank function of a positive operator, and then show that this quantity is lower semicontinuous with respect to inductive limits.  After computing this invariant for recursive subhomogeneous algebras, we are able to prove that simple approximately subhomogeneous algebras with slow dimension growth have a Cuntz semigroup consisting of the Murray-von Neumann semigroup together with the lower semicontinuous strictly positive linear functions on the trace space.
In \cite{UnitlessZ}, the first named author shows that this structure for the Cuntz semigroup entails $\mathcal{Z}$-stability for the approximately subhomogeneous algebra;  in fact, slow dimension growth and $\mathcal{Z}$-stability are equivalent for simple approximately subhomogeneous algebras, extending the main result of \cite{Toms:rigidity} to the nonunital case. 

Our result for unital simple $\mathrm{C}^*$-algebras with finitely many extreme traces has some relationship to the main result of \cite{MatuiSato:Comp}, which says that for unital, simple, separable, nuclear $\mathrm{C}^*$-algebras with finitely many extreme traces, strict comparison and $\mathcal{Z}$-stability are equivalent.
On the one hand, their result implies ours under the additional hypotheses of separability, nuclearity, and strict comparison.
In \cite{Winter:pure}, one finds another $\mathcal{Z}$-stability theorem for $\mathrm{C}^*$-algebras under a number of hypotheses including (tracial) $m$-divisibility, a condition related to the ranks of positive operators.
Our result gives evidence that one may not be surprised that the $\mathcal{Z}$-stability theorem in \cite{MatuiSato:Comp} does not have a condition about the ranks of positive operators as a hypothesis: such a condition holds automatically for the algebras considered there.

The organization of this paper is as follows.
After preliminaries in Section \ref{PrelimSect}, we discuss traces and $\mathrm{C}^*$-algebras with compact primitive ideal space in Section \ref{TraceSect}.
Our first main result, concerning $\mathrm{C}^*$-algebras with finitely many extreme traces, is in Section \ref{FiniteTSect}.
The radius of divisibility, and its pertinent properties, are established in Section \ref{RODSect}.
Section \ref{CuCompSect} contains a computation of the Cuntz semigroup for simple exact $\mathrm{C}^*$-algebras, assuming that the Cuntz semigroup enjoys certain regularity properties.
Finally, we apply the theory developed in Sections \ref{RODSect} and \ref{CuCompSect} to approximately subhomogeneous $\mathrm{C}^*$-algebras in Section \ref{ASHSect}.

\section{Preliminaries} 
\alabel{PrelimSect}

Let $A$ be a $\mathrm{C}^*$-algebra. Let us consider on
$(A\otimes\mathcal K)_+$ the relation $a\precsim b$ if $v_nbv_n^*\to a$ for some sequence $(v_n)$ in $A\otimes\mathcal K$.
Let us write $a\sim b$ if $a\precsim b$ and $b\precsim a$. In this case we say that $a$ is Cuntz equivalent to $b$.
Let $\Cu(A)$ denote the set $(A\otimes \mathcal K)_+/\sim$ of Cuntz equivalence classes. We use $[ a ]$ to denote the class of $a$ in $Cu(A)$.  It is clear that
$[ a ]\leq [ b ] \Leftrightarrow a\precsim b$ defines an order on $Cu(A)$. We also endow $Cu(A)$
with an addition operation by setting $[ a ]+[ b ]:=[ a'+b' ]$, where
$a'$ and $b'$ are orthogonal and Cuntz equivalent to $a$ and $b$ respectively (the choice of $a'$ and $b'$
does not affect the Cuntz class of their sum).

We shall use $T(A)$ to denote the set of densely finite (a.k.a.\ densely defined) traces, as defined in \ccite{Definition 5.2.1}{Pedersen:CstarBook}.
Given $\tau \in T(A)$ we define a map
$d_\tau\colon  \Cu(A) \to [0,\infty]$ by the following formula:
\[
d_\tau([a]) := \lim_{n \to \infty} \tau(a^{1/n}).
\]
This is well-defined.
We could make this definition whenever $\tau$ is a $2$-quasitrace, but we wish to avoid defining these here.
Indeed, we are only concerned here with exact $\mathrm{C}^*$-algebras, and all $2$-quasitraces on an exact $\mathrm{C}^*$-algebra are traces; this was shown in \cite{BlanchardKirchberg:pi} by reducing to the unital case, which was proven in an unpublished manuscript of Haagerup \cite{Haagerup:quasitraces}.
All functionals on $\Cu(A)$ (suitably-defined) are of the form $d_\tau$ for a $2$-quasitrace $\tau$ \cite{BlackadarHandelman}.

Define $\iota:(A \tens \K)_+ \to \Lsc(T(A)), [0,\infty])$ by
\[ \iota(a)(\tau) = d_\tau(a). \]
Then $\iota(a)$ is lower semicontinuous (\ccite{Proposition 2.10}{PereraToms:recasting} in the unital case, \ccite{Section 5.1}{ElliottRobertSantiago} in general).
When we ask which ranks of positive operators occur, we mean: what is the range of $\iota$?

We shall say that $\Cu(A)$ is almost unperforated if, whenever $[x],[y] \in \Cu(A)$ satisfy
\[ (k+1)[x] \leq k[y] \]
for some $k \in \mathbb{N}$, it follows that $[x] \leq [y]$.

\section{Compact primitive ideal space and traces}
\alabel{TraceSect}

\begin{prop}
\alabel{CompactPrim}
Let $A$ be a $\mathrm{C}^*$-algebra.  The following statements are equivalent:
\begin{enumerate}
\item \alabel{CompactPrim-cpt} $\Prim(A)$ is compact;
\item \alabel{CompactPrim-cutdown} there exists $e \in A_+$ and $\e > 0$ such that $(e - \e)_+$ is full;
\item \alabel{CompactPrim-ped} there exists a full element in the Pedersen ideal of $A$;
\item \alabel{CompactPrim-cptcontain} there exist $a,b \in \mathrm{M}_n(A)_+$, some $n$, such that $a$ is full and $[a] \ll [b]$;
\end{enumerate}
\end{prop}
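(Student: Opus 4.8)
The plan is to prove the equivalence via the cycle $(\ref{CompactPrim-cpt})\Rightarrow(\ref{CompactPrim-cutdown})\Rightarrow(\ref{CompactPrim-ped})\Rightarrow(\ref{CompactPrim-cpt})$, and then to insert $(\ref{CompactPrim-cptcontain})$ into the loop by showing, say, $(\ref{CompactPrim-cutdown})\Rightarrow(\ref{CompactPrim-cptcontain})\Rightarrow(\ref{CompactPrim-cpt})$. For $(\ref{CompactPrim-cpt})\Rightarrow(\ref{CompactPrim-cutdown})$ I would use the correspondence between closed ideals of $A$ and open subsets of $\Prim(A)$ together with the fact that any quasi-state on $A$ restricts nontrivially to an approximate unit; more concretely, for each $\pi\in\Prim(A)$ choose $e_\pi\in A_+$ with $\pi(e_\pi)\neq 0$, note $\{\rho : \rho(e_\pi)>\|e_\pi\|/2\}$ is an open neighbourhood of $\pi$ in $\Prim(A)$, extract a finite subcover by compactness, and let $e$ be the sum of the corresponding $e_\pi$'s (rescaled so $\|e\|\le 1$); then $(e-\tfrac14)_+$ is not contained in any primitive ideal, hence is full. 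The implication $(\ref{CompactPrim-cutdown})\Rightarrow(\ref{CompactPrim-ped})$ is immediate: $(e-\e)_+$ lies in the Pedersen ideal since it is a compactly-supported function of $e$ (equivalently, $(e-\e/2)_+\,(e-\e)_+\,(e-\e/2)_+ = (e-\e)_+$ exhibits it as dominated by a positive element, so it is in the minimal dense ideal).

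For $(\ref{CompactPrim-ped})\Rightarrow(\ref{CompactPrim-cpt})$ I would argue contrapositively: if $\Prim(A)$ is not compact, then $A$ has an approximate unit $(h_\lambda)$ none of whose members is full (one can build an increasing sequence of proper closed ideals with dense union), and I would use the characterization of the Pedersen ideal as the set of elements $x$ for which there is $h\in A_+$ with $hx=x$ — such an $h$ would then have to be full if $x$ is full, contradicting that full elements of $A_+$ generate $A$ as an ideal only if $A$ itself is "compactly generated." The cleanest route is probably: a full element $a$ of the Pedersen ideal satisfies $a \le C\cdot h$ for some $h$ in the Pedersen ideal with $\|h\|\le 1$ and $ha=a$; fullness of $a$ forces fullness of $h$, and then $(h-\e)_+$ is full for small $\e$ (since $h$ is full and the ideal generated by $(h-\e)_+$ increases to that generated by $h$), which gives back $(\ref{CompactPrim-cutdown})$, and from a full $(h-\e)_+$ one recovers compactness of $\Prim(A)$ because the open sets $U_\e = \{\pi : \|\pi(h)\| > \e\}$ cover $\Prim(A)$ and $U_\e$ is clopen-cofinal in a way that forces $\Prim(A)=U_\e$ for small $\e$. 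I should be careful to phrase this using only the ideal–open-set correspondence and the structure of the Pedersen ideal rather than ad hoc arguments.

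Finally, for $(\ref{CompactPrim-cptcontain})$: given $(\ref{CompactPrim-cutdown})$ with $e,\e$, set $a = (e-\e)_+$ and $b = (e-\e/2)_+$ (viewed in $A\subseteq\mathrm{M}_1(A)$); then $a$ is full by hypothesis, and $[a]\ll[b]$ holds because $(e-\e)_+ = f((e-\e/2)_+)$ for a continuous function $f$ vanishing near $0$, which is the standard way compact containment $\ll$ arises between two cut-downs of the same element. Conversely, $(\ref{CompactPrim-cptcontain})\Rightarrow(\ref{CompactPrim-cpt})$: from $[a]\ll[b]$ one gets $a\precsim (b-\e)_+$ for some $\e>0$, so the (closed, two-sided) ideal generated by $a$ is contained in that generated by $(b-\e)_+$; since $a$ is full in $\mathrm{M}_n(A)$, so is $(b-\e)_+$, and $(b-\e)_+$ is a cutdown element, placing us in a matrix-amplified version of $(\ref{CompactPrim-cutdown})$ — and $\Prim(\mathrm{M}_n(A)) \cong \Prim(A)$, so compactness transfers down. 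The main obstacle I anticipate is the implication $(\ref{CompactPrim-ped})\Rightarrow(\ref{CompactPrim-cpt})$, where I must translate the purely algebraic statement "there is a full element in the minimal dense ideal" into the topological statement about $\Prim(A)$ without circular reasoning; the key technical input is the precise description of the Pedersen ideal (an element $x$ is in it iff $x$ is dominated in norm by some $h\in A_+$ that acts as a local unit on $x$) combined with the lattice isomorphism between ideals of $A$ and open subsets of $\Prim(A)$.
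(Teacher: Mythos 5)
Your reductions (i)$\Rightarrow$(ii), (ii)$\Rightarrow$(iii), (ii)$\Rightarrow$(iv), and the step from (iv) back to a full cutdown in $\mathrm{M}_n(A)$ are essentially sound (two small slips: after rescaling $e$ to norm at most $1$, the threshold $1/4$ must be replaced by one depending on the size of the finite subcover; and the identity $(e-\e/2)_+(e-\e)_+(e-\e/2)_+=(e-\e)_+$ is false, although the claim it was meant to support --- that cutdowns lie in the Pedersen ideal --- is standard and is exactly what the paper uses, via the description in the proof of \ccite{Theorem 5.6.1}{Pedersen:CstarBook}). Your route to (iv) through $a=(e-\e)_+$, $b=(e-\e/2)_+$ is in fact a pleasant shortcut compared with the paper's (iii)$\Rightarrow$(iv), which instead uses $a\leq\sum_i f_i(x_i)$ with $f_i\in C_c((0,\infty))_+$ to get $[a]\leq\sum_i[f_i(x_i)]\ll\sum_i[x_i]$. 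The problem is that every branch of your argument ultimately funnels through (ii)/(iii)$\Rightarrow$(i), and that implication --- the only place where compactness of $\Prim(A)$ actually has to be produced --- is where your proposal has genuine gaps.

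Concretely: (a) the ``characterization of the Pedersen ideal as the set of elements $x$ for which there is $h\in A_+$ with $hx=x$'' is not a correct characterization; Pedersen's description is that the positive part of the minimal dense ideal consists of elements \emph{dominated by finite sums} of elements admitting local units (equivalently, by sums $\sum_i f_i(x_i)$ with $f_i$ vanishing near $0$), and the assertion that every positive element of the Pedersen ideal itself admits an exact local unit $h$ with $ha=a$ is a nontrivial claim you neither prove nor cite. (b) Even granting such an $h$, your inference that $(h-\e)_+$ is full ``since $h$ is full and the ideal generated by $(h-\e)_+$ increases to that generated by $h$'' is a non sequitur and is false for general full $h$: in $C_0(\R)$ a strictly positive function is full while no cutdown of it is --- a dense increasing union of ideals need not stabilize, which is precisely the compactness issue at stake. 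The relation $ha=a$ can be used, but differently: it forces $(h-\e)_+a=(1-\e)a$, so fullness of $a$ (not of $h$) yields fullness of $(h-\e)_+$. (c) Finally, the passage from ``some cutdown is full'' to ``$\Prim(A)$ is compact'' is only gestured at (``clopen-cofinal in a way that forces \dots''); this is the real content of (ii)$\Rightarrow$(i), which the paper obtains by citing \ccite{Proposition 3.5}{ProjlessReg}, and which you would need to argue, e.g., via the ideal--open-set correspondence (any upward directed family of ideals with dense union contains $(h-\e)_+$ in some member, which is then all of $A$) or via the standard fact that $\{\pi\in\Prim(A):\|\pi(h)\|\geq\e\}$ is quasi-compact. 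Your first, ``contrapositive'' sketch for (iii)$\Rightarrow$(i) (ending in the undefined notion ``compactly generated'') does not repair this.
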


\begin{proof}
The equivalence of \ref{CompactPrim-cpt} and \ref{CompactPrim-cutdown} follows directly from \ccite{Proposition 3.5}{ProjlessReg}.
\ref{CompactPrim-cutdown} $\Rightarrow$ \ref{CompactPrim-ped} is clear since $(e-\e)_+$ is in the Pedersen ideal of $A$ (this is evident from the description of the Pedersen ideal in the proof of \ccite{Theorem 5.6.1}{Pedersen:CstarBook}).

\ref{CompactPrim-ped} $\Rightarrow$ \ref{CompactPrim-cptcontain}:
Let $a$ be full and in the Pedersen ideal.
We shall show that there exists $b \in M_n(A)_+$ for some $n$ such that $[a] \ll [b]$.

The proof of \ccite{Theorem 5.6.1}{Pedersen:CstarBook} shows that there exist $x_1,\dots,x_n \in A_+$ and $f_1,\dots,f_k \in C_c((0,\infty))_+$ such that
\[ a \leq \sum_{i=1}^n f_i(x_i). \]
Thus,
\[ [a] \leq \sum_i [f_i(x_i)] \ll \sum_i [x_i], \]
which is to say that if
\[ b := \bigdsum_i x_i \]
then $[a] \ll [b]$.

\ref{CompactPrim-cptcontain} $\Rightarrow$ \ref{CompactPrim-cpt}:
Given $a,b$ as in (iv), there exists $\epsilon>0$ such that $[a] \ll [(b-\epsilon)_+]$.  In particular $a \precsim (b-\epsilon)_+$, whence $(b-\epsilon)_+$ is full.  \ref{CompactPrim-cutdown} now follows for $\mathrm{M}_n(A)$ by setting $b=e$.  From the equivalence of \ref{CompactPrim-cpt} and \ref{CompactPrim-cutdown} we have that $\mathrm{Prim}(\mathrm{M}_n(A)) \cong \mathrm{Prim}(A)$ is compact, as required. 
\end{proof}

\begin{remark*}
It was pointed out by George Elliott that, the equivalence of \ref{CompactPrim-ped} and \ref{CompactPrim-cptcontain} can be generalized to the following fact: for any $\mathrm{C}^*$-algebra $A$, the Pedersen ideal of $A$ is
\begin{equation}
\{a \in A: \exists\ b \in (A \tens \K)_+\text{ s.t.\ }[a] \ll [b] \text{ in } \Cu(A)\}.
\alabel{PedCuDesc}
\end{equation}
Indeed, one inclusion is evident from the proof of \ccite{Theorem 5.6.1}{Pedersen:CstarBook}, while the other is shown by showing that the set \eqref{PedCuDesc} is an ideal.
This turns the equivalence of \ref{CompactPrim-ped} and \ref{CompactPrim-cptcontain} into a more general statement (and provides an alternate proof).
\end{remark*}

\begin{lemma}\label{full}
Let $A$ be a $\mathrm{C}^*$-algebra.  If $a \in A_+$ is full, then $\infty[a] := \sup_n n[a]$ is the largest element of $Cu(A)$.
\end{lemma}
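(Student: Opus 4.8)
The plan is to show directly that $[b]\le\infty[a]$ for every $b\in(A\tens\K)_+$, where $\infty[a]=\sup_n n[a]$ makes sense because $\Cu(A)$ admits suprema of increasing sequences and $n\mapsto n[a]$ is increasing. Using the standard identity $[b]=\sup_{\e>0}[(b-\e)_+]$ in $\Cu(A)$, it is enough to find, for each fixed $\e>0$, an integer $n$ with $(b-\e)_+\precsim\bigdsum_{i=1}^n a$; then $[(b-\e)_+]\le n[a]\le\infty[a]$, and taking the supremum over $\e$ finishes the argument.

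To produce such an $n$, I would argue as follows. Since $a$ is full in $A$ and the ideals of $A\tens\K$ are precisely those of the form $I\tens\K$ with $I$ an ideal of $A$, the element $a$ (regarded inside $A\tens\K$) is full in $A\tens\K$; hence $b$ — and therefore also $(b-\e/2)_+$ and its square root $c:=(b-\e/2)_+^{1/2}$ — lies in the closed two-sided ideal $\closure{(A\tens\K)\,a\,(A\tens\K)}$. Thus, given $\delta>0$, there is a finite sum $d=\sum_{i=1}^n x_i a y_i$ with $\|c-d\|<\delta$. Writing $d=X a_n Y$ with $a_n:=\bigdsum_{i=1}^n a$, $X$ the row $(x_1,\dots,x_n)$ and $Y$ the column $(y_1,\dots,y_n)^{\mathrm{t}}$, the elementary estimate $dd^*=X a_n (YY^*) a_n X^*\le\|Y\|^2\,X a_n^2 X^*$ together with $wzw^*\precsim z$ (for $z\ge0$) and $a_n^2\sim a_n$ gives $dd^*\precsim a_n$. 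Since $\|c-d\|<\delta$ forces $\|c^2-dd^*\|<2\|c\|\delta+\delta^2=:\delta'$, R\o rdam's lemma yields $(b-\e/2-\delta')_+=\bigl((b-\e/2)_+-\delta'\bigr)_+=(c^2-\delta')_+\precsim dd^*\precsim a_n$. Choosing $\delta$ small enough that $\delta'<\e/2$, we obtain $(b-\e)_+\precsim(b-\e/2-\delta')_+\precsim a_n$, as wanted; the supremum over $\e$ then gives $[b]\le\infty[a]$.

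The only step with genuine content is the passage from ``$b$ lies in the ideal generated by $a$'' to the concrete subequivalence $(b-\e)_+\precsim\bigdsum_i a$; the trick that makes it work is the factorization $d=X a_n Y$ of the ideal-approximant, after which two conjugation estimates and $a_n^2\sim a_n$ do the rest. Two points are worth flagging. First, one really must cut $b$ down by $\e$ before comparing: $b$ itself need not be Cuntz-below any finite multiple of $a$ (for instance when $b$ is not ``compactly supported''), so the reduction $[b]=\sup_{\e>0}[(b-\e)_+]$ is doing real work. Second, Proposition~\ref{CompactPrim} is not available as a shortcut, since a $\mathrm{C}^*$-algebra possessing a full positive element need not have compact primitive ideal space — indeed $(a-\e)_+$ is generally not full — so the argument genuinely uses fullness of $a$ in the manner above. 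Everything else (existence of $\sup_n n[a]$, the supremum decomposition of $[b]$, R\o rdam's lemma, and $wzw^*\precsim z$) is routine Cuntz-semigroup theory.
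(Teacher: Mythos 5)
Your proof is correct, and it takes a genuinely different route from the paper's. The paper first invokes Brown's stable isomorphism theorem to identify $\overline{aAa}\otimes\mathcal{K}$ with $A\otimes\mathcal{K}$, so that $a$ may be assumed strictly positive; it then uses the approximate unit $f_k=a^{1/k}\otimes p_k$ of $A\otimes\mathcal{K}$ together with R\o rdam's lemma to get $[(b-\epsilon)_+]\leq[f_k]=\mathrm{rank}(p_k)[a]$. You instead work directly from fullness: approximate $(b-\e/2)_+^{1/2}$ in the closed ideal generated by $a$ by a finite sum $\sum_i x_i a y_i$, factor it as $Xa_nY$ through $a_n=\bigoplus_{i=1}^n a$, and use the elementary estimates $wzw^*\precsim z$, $a_n^2\sim a_n$ and R\o rdam's lemma to conclude $(b-\e)_+\precsim a_n$. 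Your steps all check out (the norm perturbation $\|c^2-dd^*\|<2\|c\|\delta+\delta^2$, the functional-calculus identity $((b-\e/2)_+-\delta')_+=(b-\e/2-\delta')_+$, and the passage from $a\otimes e_{11}$ full in $A$ to full in $A\otimes\mathcal{K}$ are all correct). What the two approaches buy: yours is self-contained and avoids the heavy input of Brown's theorem, exhibiting explicitly the finite $n$ with $[(b-\e)_+]\leq n[a]$ via the ideal-approximation trick; the paper's is shorter modulo Brown's theorem, since strict positivity of $a$ hands one an approximate unit built from $a$ for free. Your closing remarks (that cutting down by $\e$ is essential since $[b]$ need not lie below any finite multiple of $[a]$, and that Proposition \ref{CompactPrim} offers no shortcut) are accurate.
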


\begin{proof}
By Brown's Theorem (\cite{Brown:StableIsomorphism}), we have $\overline{aAa} \otimes \mathcal{K} \cong A \otimes \mathcal{K}$; by identifying these, we may assume that $a$ is strictly positive.
Let $(p_n)$ be an increasing sequence of finite rank projections converging to $\mathbf{1} \in B(\mathcal{H})$ in the strong operator topology, so that $f_n := a^{1/n} \otimes p_n$ is an approximate unit for $A \otimes \mathcal{K}$.  Let $b \in (A \otimes \mathcal{K})_+$ and $\epsilon>0$ be given, and find $k \in \mathbb{N}$ large enough that 
\[
\|b^{1/2}f_kb^{1/2}-b\|< \epsilon.
\]
It follows by \ccite{Proposition 2.2}{Rordam:UHFII} that there is $x \in A \otimes \mathcal{K}$ such that 
\[
(xb^{1/2})f_k(xb^{1/2})^* = (b-\epsilon)_+, 
\]
whence
\[
\infty[a] \geq  \mathrm{rank}(p_k)[a] = [f_k] \geq [(b-\epsilon)_+].
\]
Since $\epsilon$ was arbitrary we have $\infty[a] \geq [b]$, as required.
\end{proof}

\begin{lemma}\label{densefinite}
Let $A$ be a $\mathrm{C}^*$-algebra with $a \in A_+$ full.  If $\tau$ is a lower semicontinuous trace on $A_+$ with $\tau(a)<\infty$, then $\tau$ is densely finite.
\end{lemma}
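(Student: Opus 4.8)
The plan is to use fullness of $a$ to bound the trace of a dense set of positive elements. Recall that $\tau$ being densely finite means that the set $\{x \in A_+ : \tau(x) < \infty\}$ is dense in $A_+$ (equivalently, generates $A$ as a hereditary subalgebra); it suffices to show the ideal generated by $\{x \in A_+ : \tau(x) < \infty\}$ is all of $A$. First I would observe that since $a$ is full, for any $b \in A_+$ and any $\e > 0$, the element $(b-\e)_+$ lies in the (algebraic, two-sided) ideal generated by $a$. Concretely, $(b-\e)_+ = \sum_{j=1}^m y_j^* a y_j$ for suitable $y_j \in A$ — this is a standard consequence of fullness applied to the element $(b-\e)_+$, which is supported away from $0$ in its own hereditary subalgebra, so by a Rørdam-type argument (as in the proof of Lemma \ref{full}) one gets $(b-\e)_+$ is Cuntz-below a multiple of $a$, and in fact lies in the algebraic ideal generated by $a$.

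Next, from $(b-\e)_+ = \sum_{j=1}^m y_j^* a y_j$ I would estimate the trace. Since $\tau$ is a lower semicontinuous trace on $A_+$, it extends to a positive linear functional on its domain, and $\tau(y_j^* a y_j) = \tau(y_j y_j^* \cdot \text{(something)})$ — more carefully, using the trace property and $y_j^* a y_j \leq \|y_j\|^2 \, (\text{scaled copy of } a)$ inside $\mathrm{M}_m(A)$, one gets $\tau\big((b-\e)_+\big) \leq C \cdot \tau(a) < \infty$ for a constant $C$ depending on the $y_j$. The cleanest way to run this: work in $\mathrm{M}_m(A)$, where $\tau$ induces a trace $\tau_m$; then $(b-\e)_+ \oplus 0 \sim \mathrm{diag}(y_1^* a y_1, \dots, y_m^* a y_m) \precsim$ (something dominated by) $\mathbf{1}_m \otimes a$ scaled, and lower semicontinuous traces are monotone under $\precsim$ composed with $d_\tau$ — actually the direct inequality $\tau(y^* a y) \leq \|y\|^2 \tau(a)$ from positivity and the trace identity $\tau(y^* a y) = \tau(a^{1/2} y y^* a^{1/2}) \leq \|y y^*\| \tau(a)$ is enough, summed over $j$.

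Therefore $\tau\big((b-\e)_+\big) < \infty$ for every $b \in A_+$ and every $\e > 0$. Since $(b-\e)_+ \to b$ in norm as $\e \to 0$, and such elements are all in the domain of finiteness of $\tau$, the set $\{x \in A_+ : \tau(x) < \infty\}$ is norm-dense in $A_+$, which is precisely the statement that $\tau$ is densely finite (in the sense of \ccite{Definition 5.2.1}{Pedersen:CstarBook}). The main obstacle I anticipate is purely bookkeeping: making rigorous the passage from "$a$ full" to an \emph{honest algebraic} identity $(b-\e)_+ = \sum_j y_j^* a y_j$ (as opposed to a mere norm-approximation or Cuntz subequivalence), and checking that the trace estimate genuinely controls $\tau\big((b-\e)_+\big)$ rather than just $d_\tau\big((b-\e)_+\big)$; once that algebraic representation is in hand, the trace bound and the density conclusion are routine.
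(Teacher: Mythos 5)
Your proposal is correct and follows essentially the same route as the paper: reduce to showing $\tau\bigl((b-\e)_+\bigr)<\infty$, use fullness of $a$ to write $(b-\e)_+$ exactly as a finite sum of conjugates of $a$ (the paper gets this via Lemma \ref{full}, compact containment $n[a]\geq[(b-\e/2)_+]$, and R{\o}rdam's lemma producing $x(\oplus_{i=1}^n a)x^*=(b-\e)_+$, which is the same algebraic identity you ask for), and then apply the estimate $\tau(y^*ay)=\tau(a^{1/2}yy^*a^{1/2})\leq\|y\|^2\tau(a)$. The ``bookkeeping'' step you flag is indeed the standard fact, and the paper's proof supplies exactly that mechanism, so there is no gap.
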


\begin{proof}
It will suffice to prove that $\tau((b-\epsilon)_+)<\infty$ for each $b \in A_+$ and $\epsilon>0$.  Let $b$ and $\epsilon$ be given.  By Lemma \ref{full} we have $\infty[a] \geq [b]$, so that 
\[
\infty[a] \gg [(b-\epsilon/2)_+].  
\]
It follows that 
\[
n[a] \geq [(b-\epsilon/2)_+]
\]
for some $n \in \mathbb{N}$, so we can find $x \in A \otimes \mathcal{K}$ such that 
\[
x\left( \oplus_{i=1}^n a \right)x^* = (b-\epsilon)_+.
\]
Extending $\tau$ to $A \otimes \mathcal{K}$ we then have
\begin{eqnarray*}
\tau((b-\epsilon)_+) & = & \tau((\oplus_{i=1}^n a^{1/2})(x^*x)(\oplus_{i=1}^n a^{1/2})) \\
& \leq & \|x\|^2 \tau(\oplus_{i=1}^n a) \\
& < & \infty.
\end{eqnarray*} 
\end{proof}

For any $a \in A$, set
\[ T_{a \mapsto 1}(A) := \{\tau \in T(A): \tau(a) = 1\}. \]
(In the case that $A$ is unital and $a=1$, this is of course the set of normalized traces.)

\begin{prop}
\alabel{TChoquet}
Suppose that $e \in A_+$ is full and in the Pedersen ideal of $A$.
It follows that $T_{e \mapsto 1}(A)$ is:
\begin{enumerate}
\item a base for the cone of densely finite traces on $A$; 
\item compact, in the topology of pointwise converge on the Pedersen ideal of $A$;
\item a Choquet simplex.
\end{enumerate}
\end{prop}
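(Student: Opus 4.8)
The plan is to establish all three properties by exploiting the pairing between densely finite traces and the Pedersen ideal, with the element $e$ playing the role of a normalizing "unit". First I would check that $T_{e\mapsto 1}(A)$ is a base for the cone $T(A)$ of densely finite traces: since $e$ lies in the Pedersen ideal, every $\tau\in T(A)$ satisfies $\tau(e)<\infty$, and since $e$ is full, $\tau(e)>0$ for every nonzero $\tau$ (otherwise $\tau$ would vanish on the ideal generated by $e$, which is dense). Hence each nonzero $\tau\in T(A)$ can be rescaled uniquely to lie in $T_{e\mapsto 1}(A)$, and this rescaling exhibits $T_{e\mapsto 1}(A)$ as a base: $T(A)=\R_{\geq 0}\cdot T_{e\mapsto 1}(A)$ with uniqueness of the decomposition. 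Conversely, by Lemma \ref{densefinite}, any lower semicontinuous trace with $\tau(e)=1<\infty$ is automatically densely finite, so the set is exactly the "slice" of the cone of lower semicontinuous traces at level $1$, which is the cleanest description to work with.

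Next I would prove compactness in the topology of pointwise convergence on the Pedersen ideal. The Pedersen ideal $\mathrm{Ped}(A)$ is a dense hereditary ideal, and for $x\in\mathrm{Ped}(A)_+$ one has (using fullness of $e$ and Lemma \ref{full}, exactly as in the proof of Lemma \ref{densefinite}) a bound of the form $x\precsim \bigdsum_{i=1}^{n} e$ in $\Cu(A\tens\K)$, which yields $\tau(x)\leq C_x\,\tau(e)=C_x$ for every $\tau\in T_{e\mapsto 1}(A)$, where $C_x$ depends only on $x$. Thus $T_{e\mapsto 1}(A)$ embeds into the compact product space $\prod_{x\in\mathrm{Ped}(A)_+}[0,C_x]$, and it remains to check that the image is closed: a pointwise limit of traces is additive, positively homogeneous and tracial on $\mathrm{Ped}(A)$ (these are closed conditions), extends to a lower semicontinuous trace on $A_+$ (by lower semicontinuity one defines $\tau(b)=\sup\{\tau(x):\mathrm{Ped}(A)\ni x\leq b\}$), and still satisfies $\tau(e)=1$; hence the limit lies in $T_{e\mapsto 1}(A)$.

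For the Choquet simplex property, I would identify $T_{e\mapsto 1}(A)$ with a base of the cone of lower semicontinuous traces and invoke the standard fact that this cone is lattice-ordered in its own ordering. Concretely, the cone $T(A)$ (equivalently, the cone of densely finite lower semicontinuous traces) can be realized as the cone of additive, homogeneous, order-preserving, lower semicontinuous functionals on the positive part of the Pedersen ideal; such a cone is a complete lattice in the pointwise order on extreme points, and more to the point, restricting traces to a commutative (hence in particular to a maximal abelian) subalgebra or using the infimum construction $\tau_1\wedge\tau_2$ via $(\tau_1\wedge\tau_2)(x)=\inf\{\tau_1(x_1)+\tau_2(x_2): x_1+x_2=x\}$ shows the cone is a lattice cone. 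Once we know the cone is a lattice cone with a compact base (from parts (i) and (ii)), the base is a Choquet simplex by definition (a compact convex set is a Choquet simplex iff the cone it generates is a lattice in its own order).

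The main obstacle I expect is the Choquet simplex step, specifically verifying that the infimum $\tau_1\wedge\tau_2$ computed on the Pedersen ideal is again a (lower semicontinuous, densely finite) trace — the tracial identity and well-definedness of the infimum require some care with the orthogonality/approximation machinery for positive elements, and one must confirm the infimum is finite on $e$ so that it lands back in the base after rescaling. An alternative that sidesteps part of this is to cite an existing result that the cone of densely defined lower semicontinuous (quasi)traces on any $\mathrm{C}^*$-algebra is a Choquet cone — e.g. from the work on traces in \cite{ElliottRobertSantiago} — and then simply note that our compact base sits inside it as the level set $\{\tau(e)=1\}$, inheriting the simplex structure. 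In the write-up I would likely take the latter route for (iii), reserving the hands-on lattice argument as a remark.
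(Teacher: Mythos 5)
Parts (i) and (iii) of your proposal are essentially the paper's argument: (i) is proved exactly by the observation that fullness gives $\tau(e)>0$ and membership in the Pedersen ideal gives $\tau(e)<\infty$, and for (iii) the paper does precisely what you offer as your fallback, namely it quotes the literature (Pedersen's measure-theory papers) for the fact that the cone of densely finite traces is a lattice cone and combines this with (i) and (ii); your hands-on $\tau_1\wedge\tau_2$ construction is not needed and is not attempted in the paper either. The real divergence is in (ii), where you embed $T_{e\mapsto 1}(A)$ into $\prod_{x}[0,C_x]$ via Tychonoff and claim the image is closed, whereas the paper works inside the cone of lower semicontinuous traces equipped with the Elliott--Robert--Santiago topology (whose compactness is already known), proves that $T_{e\mapsto 1}(A)$ is closed there -- the substantive point being that a limit trace cannot be zero or lose the normalization, which is handled by a uniform estimate $\eta(e)\le K\,\eta((e-\delta)_+)$ valid for all traces $\eta$, obtained from fullness of $e$ -- and then identifies the subspace topology with pointwise convergence on the Pedersen ideal by citing \ccite{Proposition 3.10}{ElliottRobertSantiago}.

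Your route has a genuine gap at exactly the step the paper's route is designed to avoid: closedness of the image. A pointwise limit on $\mathrm{Ped}(A)$ of elements of $T_{e\mapsto 1}(A)$ is a positive, tracial linear functional on $\mathrm{Ped}(A)$ with value $1$ at $e$, but the assertion that it ``extends to a lower semicontinuous trace on $A_+$'' via $\tau(b)=\sup\{\tau(x): \mathrm{Ped}(A)\ni x\le b\}$ is the technical heart and is not justified by that formula alone: subadditivity of the sup-extension requires a Riesz-type decomposition $x=d_1^*d_1+d_2^*d_2$ with $d_id_i^*\le b_i$, where traciality must be applied to elements not obviously in $\mathrm{Ped}(A)$, and one must also know that $\sup_{\e>0}\tau_0((x-\e)_+)=\tau_0(x)$ for a merely positive tracial functional $\tau_0$ on $\mathrm{Ped}(A)$ (no singular part), plus lower semicontinuity of the extension. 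In effect you are invoking the correspondence between positive tracial functionals on the Pedersen ideal and densely finite lower semicontinuous traces; this is true but nontrivial, and it must be proved or cited rather than asserted, so as written the compactness proof is incomplete. A smaller slip in the same part: the bound $\tau(x)\le C_x$ for $\tau\in T_{e\mapsto 1}(A)$ cannot be deduced from $x\precsim \bigdsum_{i=1}^n e$ alone, since Cuntz subequivalence only gives $d_\tau(x)\le n\,d_\tau(e)$ and $d_\tau(e)$ may be infinite even when $\tau(e)=1$; you need the explicit implementing element (as in the proofs of Lemmas \ref{full} and \ref{densefinite}), writing cut-downs of $x$ in the form $v\bigl(\bigdsum_{i=1}^n e\bigr)v^*$ and using the Pedersen-ideal domination $x\le\sum_i f_i(x_i)$ to get a constant $n\|v\|^2$ independent of $\tau$.
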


\begin{proof}
(i) 
On the other hand, suppose $\tau \in T(A) \setminus \{0\}$.
Then $\tau(e) > 0$, or else $e$ wouldn't be full.
Also, $\tau(e) < \infty$ since $e$ is in the Pedersen ideal.
Hence $\tau \in \R_+ T_{e \mapsto 1}(A)$, as required.

(ii)
We shall show that $T(A)$ is closed in the topology defined in \ccite{Section 3.2}{ElliottRobertSantiago}, from which it follows that it is compact in that topology.  By (i) and \ccite{Proposition 3.10}{ElliottRobertSantiago}, the restriction of this topology to $T_{e \mapsto 1}(A)$ agrees with the topology of pointwise convergence on the Pedersen ideal.

Suppose that $(\tau_i) \subseteq T_{e \mapsto 1}(A)$ is a net which converges to $\tau \in T(A)$.
Since $e$ is in the Pedersen ideal, by the proof of \ccite{Theorem 5.6.1}{Pedersen:CstarBook}, let $e \leq (a-\e)_+$ for some $a \in (A \tens \K)_+$ and $\e > 0$.
Since $[e]$ is full there is $n \in \mathbb{N}$ such that $[(a-\epsilon/2)_+] \ll n[e]$;  we can even arrange that $[(a-\epsilon/2)_+] \ll n[(e-\delta)_+]$ for sufficiently small $\delta$.  
We can then find $\e > 0$ and $x \in \mathrm{M}_n(A)$ such that
\[ 
e \leq (a-\epsilon)_+ = x \left( \oplus_{i=1}^n (e-\delta)_+ \right)x^* . 
\]
Therefore, for any $\eta \in T(A)$, we have
\[ 
\eta(e) \leq n \|x\|^2  \eta((e-\delta)_+) \leq K \eta((e-\delta)_+), 
\]
where $K = n \|x\|^2$.
Using the definition of the topology in \ccite{Section 3.2}{ElliottRobertSantiago}, we have
\begin{align*}
1/K &= \limsup \tau_i(e)/K \\
&\leq \tau_i((e-\delta)_+) \\
&\leq \tau(e) \\
&\leq \liminf \tau_i(e) \\
&= 1.
\end{align*}
Therefore, $\tau$ is a nonzero, densely finite trace on $A$.
It follows from \ccite{Proposition 3.10}{ElliottRobertSantiago}, that since $e$ is in the Pedersen ideal,
\[ \tau(e) = \lim \tau_i(e) = 1. \]
Hence, $\tau \in T_{e \mapsto 1}(A)$ as required.

(iii)
The cone of densely finite traces is, by \ccite{Corollary 3.3}{Pedersen:MeasureI} and \ccite{Theorem 3.1}{Pedersen:MeasureIII} a lattice cone.
It follows from this and (i),(ii) that $T_{e \mapsto 1}(A)$ is a Choquet simplex.
\end{proof}

\section{$\mathrm{C}^*$-algebras with finitely many extreme traces}
\alabel{FiniteTSect}

Suppose that the Pedersen ideal of $A$ contains a full element $e$, which we also ask to be positive.
We may clearly identify $\Lsc(T(A), (0,\infty])$ with the space 
\[ \LAff(T_{e \mapsto 1}(A), (0,\infty]) \]
of lower semicontinuous affine functions $T_{e \mapsto 1}(A)$ to $(0,\infty]$.

\begin{prop}
\alabel{ContinuousDenseSuffices}
Suppose that $A$ has a full positive element $e$ in its Pedersen ideal.
Then, the range of $\iota$ contains all of $\Lsc(T(A),(0,\infty])$ if and only if the range of $\iota$ contains a uniformly dense subset of continuous affine functions $T_{e \mapsto 1}(A) \to (0,\infty)$.
Moreover, in this case, $\Lsc(T(A),(0,\infty]) \subset \iota(\Cu(A) \setminus V(A))$.
\end{prop}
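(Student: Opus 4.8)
The plan is to prove the two statements in turn, with the forward direction of the "if and only if" and the final inclusion being the substantive parts. The reverse implication is immediate: if the range of $\iota$ contains *all* of $\Lsc(T(A),(0,\infty])$ then in particular it contains every continuous affine function $T_{e \mapsto 1}(A) \to (0,\infty)$, which is a uniformly dense subset of itself, so there is nothing to do there. For the forward direction, suppose the range of $\iota$ contains a uniformly dense set $D$ of continuous affine functions. Fix an arbitrary $f \in \Lsc(T(A),(0,\infty])$, viewed as an element of $\LAff(T_{e \mapsto 1}(A),(0,\infty])$ via the identification set up just before the proposition. The strategy is to write $f$ as a supremum of an increasing sequence coming from the dense set, realize each term as the rank of a positive operator, and then take a supported direct sum.

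The first key step is a constructive approximation: I want to produce continuous affine functions $g_1 \le g_2 \le \cdots$ with $g_n \in D + (\text{small positive constant})$ — or at least each $g_n$ dominated by a function in the range of $\iota$ — such that $g_n \nearrow f$ pointwise, with $g_n < f$ strictly wherever $f$ is finite, and $g_n \to \infty$ wherever $f = \infty$. This is where the lower semicontinuity of $f$ and the compactness of $T_{e \mapsto 1}(A)$ (Proposition~\ref{TChoquet}) are used; lower semicontinuity of an affine function on a compact Choquet simplex lets us write it as a supremum of continuous affine functions, and by density we may perturb these to lie in $D$ after shrinking slightly. The main obstacle is handling the value $+\infty$ cleanly: I will need the $g_n$ to be genuine continuous real-valued functions that are forced up toward $\infty$ on the (closed) set where $f = \infty$, while staying below $f$ on the rest, and arranging that the sequence is increasing; this is a routine but slightly delicate interpolation argument on the simplex.

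The second key step converts the analytic supremum into a Cuntz-semigroup supremum. For each $n$, pick $a_n \in (A \tens \K)_+$ with $\iota(a_n) = d_{(\cdot)}(a_n) \ge g_n$ (using that $D$, or a dominating function, is in the range of $\iota$) and, after a cut-down $(a_n - \delta_n)_+$, arrange $d_\tau((a_n-\delta_n)_+)$ still dominates $g_{n-1}$ while being dominated by $g_n$ — standard manipulation with the fact that $\tau \mapsto \tau((a_n - \delta)_+)$ is continuous and increases to $d_\tau(a_n)$ as $\delta \to 0$. Then the orthogonal/direct sum $b := \bigdsum_n (a_n - \delta_n)_+$ (supported so the summands are mutually orthogonal inside $A \tens \K$) has $\iota(b)(\tau) = \sup_n d_\tau((a_n-\delta_n)_+) = \sup_n g_n(\tau) = f(\tau)$, using that $d_\tau$ is a supremum-preserving functional on $\Cu(A)$ and that $T(A)$-pointwise suprema of the rank functions agree with the pointwise sup of the $g_n$. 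This gives $f \in \iota(\Cu(A))$.

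Finally, for the "moreover" clause I must check $[b] \notin V(A)$, i.e. $[b]$ is not the class of a projection (not a compact element killed by the Murray–von Neumann semigroup). Since $f$ takes values in $(0,\infty]$ and is not (a priori) bounded, two cases arise. If $f$ is unbounded, then $\iota(b)$ is unbounded while $\iota$ of any projection is bounded (a projection $p$ has $d_\tau(p) = \tau(p) \le \tau(e)\cdot\|p\text{-scaling}\|$ finite and uniformly bounded on $T_{e\mapsto1}(A)$ by fullness), so $[b] \neq [p]$. If $f$ is bounded, I instead choose the approximating sequence $g_n$ to be *strictly* increasing at some point with strict inequality $g_n < f$ preserved, so that $[b] = \sup_n [(a_n-\delta_n)_+]$ is a non-stabilizing increasing supremum; such an element cannot be compact, hence is not in $V(A)$. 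The main obstacle across the whole argument is the bookkeeping in the first step — choosing the $g_n$ simultaneously increasing, strictly below $f$ where finite, diverging where infinite, and within reach of the dense family — but once that combinatorial/interpolation lemma is in hand the rest is the standard "rank function = supremum of cut-downs" machinery together with the compactness and simplex structure from Proposition~\ref{TChoquet}.
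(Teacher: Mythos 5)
Your function-level approximation (writing $f$ as the pointwise supremum of an increasing sequence of continuous affine functions, using compactness of $T_{e\mapsto 1}(A)$ and density of the set realized by $\iota$) is fine, but your second key step contains a genuine error. For an orthogonal direct sum one has $d_\tau\bigl(\bigdsum_n c_n\bigr) = \sum_n d_\tau(c_n)$, \emph{not} $\sup_n d_\tau(c_n)$: rank functions are additive on orthogonal sums, and $\bigl[\bigdsum_n c_n\bigr]$ is the supremum in $\Cu(A)$ of the partial sums $\bigl[\bigdsum_{k\le n} c_k\bigr]$, not of the individual summands. With your choice of $c_n=(a_n-\delta_n)_+$ squeezed between $g_{n-1}$ and $g_n$, hence bounded below by $g_1>0$ everywhere, your element $b$ would have $\iota(b)\equiv+\infty$. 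The only way to rescue the "supremum" reading is to set $[b]:=\sup_n[c_n]$ inside $\Cu(A)$, but for that you need $[c_n]\le[c_{n+1}]$ in the Cuntz order, and the pointwise inequalities $d_\tau(c_n)\le d_\tau(c_{n+1})$ do not yield this without a comparison hypothesis (almost unperforation), which this proposition does not assume --- that device is exactly what the paper uses later in Theorem \ref{AUDcomputation}, where almost unperforation \emph{is} assumed. (You also never address convergence of the infinite direct sum in $A\tens\K$, which requires the norms of the summands to tend to zero.)

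The paper's proof avoids this by an additive, telescoping decomposition rather than an approximation from below: having chosen continuous $f_n\nearrow f$, one inductively picks $g_n$ in the dense set $S$ and $\delta_n\in(0,1/n)$ with $f_n-\delta_n< g_1+\cdots+g_n< f_n$, so that $f=\sum_{n=1}^\infty g_n$ pointwise; then one picks representatives $a_n$ with $\iota(a_n)=g_n$ and $\|a_n\|<1/2^n$, so $a:=\bigdsum_n a_n$ converges in $A\tens\K$ and additivity of $\iota$ on orthogonal sums gives $\iota(a)=\sum_n g_n=f$. No comparison is needed, because sums (unlike suprema of order-incomparable classes) always exist in $\Cu(A)$. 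The "moreover" clause then comes for free: $[a]$ is the supremum of the strictly increasing sequence of partial sums, so it cannot be a projection class (projection classes are compact, forcing such a sequence to stabilize); your two-case treatment of the moreover clause rests on the flawed supremum construction and would need to be rerouted through this kind of argument as well.
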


\begin{proof}
$\Rightarrow$ is obvious.
Conversely, suppose that $\iota$ contains a uniformly dense subset of continuous affine functions as above.
By \ccite{Theorem 11.8}{Goodearl:book}, every function in $\Lsc(T(A),(0,\infty])$ is the supremum of continuous linear functions $T(A) \to (0,\infty)$ (strictly speaking, \ccite{Theorem 11.8}{Goodearl:book} deals with functions whose codomain is $\R$, but the same proof works for codomain $(0,\infty]$).
By \cite{Edwards}, we may in fact obtain each function in $\Lsc(T(A),(0,\infty])$ as an increasing net of continuous linear functions.
Since $A$ is separable, $T(A)$ is metrizable and we can in fact replace such a net by a sequence.
The proof of this last statement doesn't quite go as one might expect, so we separate the argument as its own lemma.

\begin{lemma}
Let $X$ be a metrizable compact Hausdorff space.
Suppose that $f:X \to [0,\infty]$ is a lower semicontinuous function which is the pointwise supremum of an increasing net $(f_\alpha)$ of lower semicontinuous functions.
Then $f$ is the pointwise supremum of an increasing sequence $(f_{\alpha_i})$.
\end{lemma}

\begin{proof}
Let $(q_k)_{k=1}^\infty$ be a dense sequence in $[0,\infty)$.
For each $k$, $f^{-1}((q_k,\infty])$ is open, and since $X$ is metrizable, it is $\sigma$-compact.
Therefore, we can find an increasing sequence of open sets $(U_{k,i})_{i=1}^\infty$, each of which has compact closure, with union $f^{-1}((q_k,\infty])$; moreover $\overline{U_{k,i}} \subseteq f^{-1}((q_k,\infty])$ for each $i$.

By using the compactness of $\overline{U_{k,i}}$, lower semicontinuity of each $f_\alpha$ and the fact that the net $(f_\alpha)$ is increasing, we can find $\alpha_i$ such that $f_{\alpha_i}(x) > q_k$ for all $x \in \overline{U_{k,i}}$, $i=1,\dots,k$.
This condition, together with density of $\{q_k\}$, forces $f$ to be the pointwise supremum of $(f_{\alpha_i})$.
As the net $(f_\alpha)$ is increasing, it is clear that we can arrange that $(f_{\alpha_i})$ is increasing.
\end{proof}

We now have that for $f \in \Lsc(T(A), (0,\infty])$, there exists an increasing sequence $(f_n)$ of continuous linear functions $T(A) \to (0,\infty)$ whose pointwise supremum is $f$.
By hypothesis, let $S$ be a uniformly dense subset of continuous affine functions from $T_{e \mapsto 1}(A)$ to $(0,\infty))$ that is contained in the range of $\iota$.
By compactness of $T_{e \mapsto 1}(A)$, let $\dl_1 \in (0,1)$ such that $f_1(\tau) > \dl_1$ for all $\tau \in T_{e \mapsto 1}(A)$.
Then there exists $g_1 \in S$ such that
\[ f_1(\tau) - \dl_1 < g_1(\tau) < f_1(\tau) \]
for all $\tau \in T_{e \mapsto 1}(A)$.
Since $f_2(\tau) \geq f_1(\tau) > g_1(\tau)$, we may likewise pick $\dl_2 \in (0,1/2)$ such that
\[ f_2(\tau) - \dl_2 > g_1(\tau) \]
for all $\tau \in T_{e \mapsto 1}(A)$.
Then there exists $g_2 \in S$ such that
\[ f_2(\tau) - g_1(\tau) - \dl_2 < g_2(\tau) < f_2(\tau) - g_1(\tau) \]
for all $\tau \in T_{e \mapsto 1}(A)$.
That is to say,
\[ f_2 - \dl_2 < g_1 + g_2 < f_2. \]
We may continue this process, finding a sequence of numbers $\dl_n \in (0,1/n)$ and functions $g_n \in S$ such that
\[ f_n - \dl_n < g_1 + \cdots + g_n < f_n. \]
Evidently, for all $\tau \in T_{e \mapsto 1}(A)$, we have
\[ f(\tau) = \sup f_n(\tau) = \sum_{n=1}^\infty g_n(\tau). \]
Since $g_n \in S$, we have $g_n = \iota(a_n)$ for some $a_n \in (A \tens \K)_+$ with $\|a_n\|<1/2^n$.
Then we have
\[ \sum_{n=1}^\infty g_n = \iota( \dsum_{n=1}^\infty a_n), \]
as required.

The last statement of the proposition follows since $[\dsum_{n=1}^\infty a_n]$ is the supremum of the strictly increasing sequence $[\dsum_{n=1}^N a_n]$.
\end{proof}

\begin{prop}
\alabel{TotallyDisconnected}
Let $A$ be unital, such that $\eb T(A)$ is compact and totally disconnected.
Then every continuous function $\eb T(A) \to (0,\infty)$ is the uniform limit of functions in the range of $\iota$.
\end{prop}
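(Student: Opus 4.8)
The plan is to exploit that the normalised trace space is a Bauer simplex together with a construction of positive elements of prescribed approximate rank in a simple (non-elementary) $\mathrm{C}^*$-algebra. Write $\Omega:=\eb T(A)$ and $K:=T_{1\mapsto 1}(A)$, the compact Choquet simplex of normalised traces. Since $\Omega$ is compact it is closed in $K$, so $K$ is a Bauer simplex and restriction is an isometric order isomorphism $\Aff(K)\cong C(\Omega)$; in particular every clopen $U\subseteq\Omega$ carries a characteristic affine function $\hat\chi_U\in\Aff(K,[0,1])$. As $\Omega$ is also totally disconnected, any $f\in C(\Omega,(0,\infty))$ is a uniform limit of finite sums $\sum_i c_i\chi_{U_i}$ taken over clopen partitions $\{U_i\}$ of $\Omega$ with $c_i>0$; and since $\iota$ is additive on orthogonal direct sums, it is enough to produce, for every clopen $U\subseteq\Omega$, every $c>0$ and every $\e>0$, a \emph{$(U,c,\e)$-bump}, by which I mean an $a\in(A\tens\K)_+$ with $d_\tau(a)\in(c-\e,c+\e)$ for $\tau\in U$ and $d_\tau(a)<\e$ for $\tau\in\Omega\setminus U$: an orthogonal sum of $(U_i,c_i,\e)$-bumps then approximates $\sum_i c_i\chi_{U_i}$ uniformly on $\Omega$.

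I would treat $c=1$ first (whence all $c\in\mathbb{N}$, by taking orthogonal copies). By the Cuntz--Pedersen theorem, $\{\hat h:h\in A_+,\ \|h\|\le 1\}$ is uniformly dense in $\{\phi\in\Aff(K):0\le\phi\le 1\}$, so choose $h\in A_+$, $\|h\|\le 1$, with $\hat h$ within $\dl$ of $\hat\chi_U$, and set $a:=(h-\tfrac12)_+$. A Chebyshev estimate on the spectral measure of $h$ in a trace $\tau$ gives $d_\tau(a)\ge 1-2\,\tau(\mathbf{1}-h)\ge 1-2\dl$ when $\tau\in U$ (where $\hat h(\tau)\approx 1$) and $d_\tau(a)\le 2\,\tau(h)<2\dl$ when $\tau\in\Omega\setminus U$ (where $\hat h(\tau)\approx 0$); since also $d_\tau(a)\le 1$, the element $a$ is a $(U,1,2\dl)$-bump.

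The substantive case is $c\in(0,1)$. Fixing a $(U,1,\dl)$-bump $a$ and passing to the hereditary subalgebra $B:=\closure{aAa}$ --- which is again simple and, by Brown's theorem, non-elementary, and on whose Cuntz classes $\le[a]$ every trace from $\Omega\setminus U$ takes value $<\dl$ --- it suffices to find $b\in B_+$ with $d_\tau(b)\approx c$ uniformly over $U$. Since $B$ is simple and non-elementary, for each $N$ it contains pairwise orthogonal, pairwise Cuntz-equivalent nonzero $e_1,\dots,e_N\in B_+$ with $\bigoplus_i e_i\precsim a$; the crucial extra requirement is that such an $N$-tuple be chosen \emph{uniformly large} over $U$, i.e.\ that $N\,d_\tau(e_1)=d_\tau(\bigoplus_i e_i)\ge(1-\dl)\,d_\tau(a)$ for all $\tau\in U$, which is a form of approximate $N$-divisibility of $[a]$. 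Granting this, $b:=\bigoplus_{i=1}^{\lfloor cN\rfloor}e_i$ satisfies $d_\tau(b)\in\big(\tfrac{(1-\dl)\lfloor cN\rfloor}{N}\,d_\tau(a),\ \tfrac{\lfloor cN\rfloor}{N}\,d_\tau(a)\big]$ for $\tau\in U$, which for large $N$ and small $\dl$ lies in $(c-\e,c+\e)$, while $b\precsim a$ forces $d_\tau(b)<\e$ off $U$. Together with the previous step this produces a $(U,c,\e)$-bump for every $c>0$, and the reduction in the first paragraph finishes the proof.

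The hard part is precisely the italicised extraction of a uniformly large orthogonal subsystem of $[a]$, since no comparison hypothesis is available. My plan there is a Zorn's-lemma maximality argument inside $\Cu(B)$ --- using that directed suprema exist and add correctly --- showing that a failure of uniform largeness at some extreme trace $\tau_0\in U$ would leave ``room'' to enlarge the subsystem, contradicting maximality; to locate the requisite nonzero orthogonal positive element one would pass to the GNS representation of $\tau_0$, which (as $\tau_0$ is extreme and $A$ is simple and non-elementary) is a type $\mathrm{II}_1$ factor in which projections of every trace in $[0,1]$ exist, and pull such an element back to $B$ using weak density together with a spectral-gap cut-off so as to keep its rank under control. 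A little extra care with these cut-offs should moreover make all the rank functions produced locally constant --- in particular continuous --- on $\Omega$, which is what is needed for the output to feed into Proposition~\ref{ContinuousDenseSuffices}.
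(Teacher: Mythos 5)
Your overall architecture runs parallel to the paper's: use total disconnectedness to reduce to a disjoint clopen cover of $\eb T(A)$, build a ``bump'' for each clopen piece, and add them orthogonally (ranks add on orthogonal sums), with an $\e/3$-type bookkeeping at the end. Your $c=1$ bumps via Cuntz--Pedersen density of $\{\hat h : h\in A_+,\ \|h\|\le 1\}$ in $\{\phi\in\Aff : 0\le\phi\le 1\}$ together with the Chebyshev estimate on $(h-\tfrac12)_+$ are fine (and, like the paper, you are implicitly using simplicity, which the statement omits but the intended application supplies).

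The genuine gap is exactly the step you flag as ``the hard part'': producing, for non-integer $c$, an element whose rank function is close to $c$ \emph{uniformly over all of} $U$. Your plan --- a maximality argument in $\Cu(B)$, plus pulling a projection of prescribed trace back from the type $\mathrm{II}_1$ factor $\pi_{\tau_0}(A)''$ using weak density and a spectral cut-off --- only gives control of $d_{\tau_0}$ of the pulled-back element. Approximating a projection in the GNS representation of a single extreme trace says nothing about $d_\gamma(b)$ at the other traces $\gamma\in U$; lower semicontinuity of $\gamma\mapsto d_\gamma(b)$ yields at best a one-sided bound on a small neighbourhood of $\tau_0$, not two-sided bounds on the whole clopen set, and it is not explained how a failure of ``uniform largeness'' at one $\tau_0$ permits enlarging a maximal orthogonal system without ruining the estimates at the remaining traces --- this is where the argument is circular. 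That simultaneous control over a neighbourhood, with no comparison or divisibility hypothesis available, is precisely the nontrivial content the paper imports from Dadarlat--Toms: Lemma 4.1 of \cite{DadarlatToms:ranks} produces, for each extreme trace, an element whose rank function approximates the target on an entire neighbourhood, and Lemma 4.5 cuts it down so its rank is small off a prescribed closed set, which is what makes the patching over the clopen cover legitimate. Without proving the uniform approximate divisibility you assume (or an equivalent local-realization lemma of that strength), your construction of $(U,c,\e)$-bumps for $c\notin\mathbb{N}$ does not go through, and the proposal remains a sketch at its decisive point.
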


\begin{proof}
Let $f: \eb T(A) \to (0,\infty)$ be a continuous function and let $\e > 0$.
By \ccite{Lemma 4.1}{DadarlatToms:ranks}, for each point $\tau \in \eb T(A)$, there exists $a_\tau \in (A \tens \K)_+$ and a neighbourhood $U_\tau$ of $\tau$ such that
\[ |d_\gamma(a_\tau) - f(\gamma)| < \e/3 \]
for all $\gamma \in U_\tau$.
By continuity of $f$, we may, by possibly shrinking $U_\tau$, also assume that $|f(\gamma) - f(\tau)| < \e/3$ for $\gamma \in U_\tau$.
Also, by the hypothesis that $\eb T(A)$ is completely disconnected, we may assume that $U_\tau$ is closed.

By compactness of $\eb T(A)$, let $U_{\tau_1},\dots,U_{\tau_n}$ be a finite subcover.
By shrinking some of the sets, we may in fact assume that $U_{\tau_1},\dots,U_{\tau_n}$ are pairwise disjoint.

Using \ccite{Lemma 4.5}{DadarlatToms:ranks} with $a_{\tau_i}$, let $b_i \in (A \tens \K)_+$ be such that
\[ d_\tau(b_i) = a_{\tau_i}(\tau) \]
for all $\tau \in U_{\tau_i}$, and
\[ d_\tau(b_i) < \frac{\e}{3(n-1)} \]
for $\tau \not\in U_{\tau_i}$.

Set $a = b_1 \dsum \cdots \dsum b_n$.
Then for $\tau \in \eb T(A)$, let $i$ be such that $\tau \in U_{\tau_i}$.
Then
\begin{align*}
&\ \ \ \ |d_\tau(b) - f(\tau)| \\
&\leq \sum_{j \neq i} |d_\tau(b_j)| + |d_\tau(b_i) - d_\tau(a_i)| + |d_\tau(a_i) - f(\tau_i)| + |f(\tau_i) - f(\tau)| \\
&< (n-1) \frac{\e}{3(n-1)} + 0 + \e/3 + \e/3 = \e,
\end{align*}
as required.
\end{proof}

\begin{cor}
If $A$ is unital and simple and $\partial_e T(A)$ is finite then $\iota$ is onto.
\end{cor}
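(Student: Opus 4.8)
The plan is to obtain the corollary by combining Proposition~\ref{TotallyDisconnected} with Proposition~\ref{ContinuousDenseSuffices}, the bridge being that a compact Choquet simplex with finitely many extreme points is a finite-dimensional simplex, on which an affine function is controlled by its values at the vertices. First I would check the hypotheses of those propositions. Since $A$ is unital, its Pedersen ideal is $A$, and $e := 1 \in A_+$ is full (the ideal it generates is $A$) and lies in the Pedersen ideal, so both propositions apply with this $e$, and $T_{e \mapsto 1}(A)$ is the tracial state space of $A$. If $\eb T(A) = \emptyset$ then $A$ carries no densely finite trace and the statement is vacuous, so suppose $\eb T(A) = \{\gamma_1,\dots,\gamma_n\}$ with $n \geq 1$. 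By Proposition~\ref{TChoquet}, $T_{e \mapsto 1}(A)$ is a compact Choquet simplex; having exactly $n$ extreme points it is affinely homeomorphic to the standard $(n-1)$-simplex. In particular $T_{e \mapsto 1}(A)$ is metrizable, it equals $\mathrm{conv}\{\gamma_1,\dots,\gamma_n\}$, and restriction to $\{\gamma_1,\dots,\gamma_n\}$ is an isometric isomorphism from the continuous affine functions $T_{e \mapsto 1}(A) \to \R$ onto $\R^n$ (the supremum of an affine function over a simplex being attained at a vertex).

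Next I would feed in Proposition~\ref{TotallyDisconnected}. Being finite, $\eb T(A)$ is compact and totally disconnected, so every continuous $f\colon \eb T(A) \to (0,\infty)$ is a uniform limit of restrictions $\iota(a)|_{\eb T(A)}$ with $a \in (A \tens \K)_+$. Now let $g\colon T_{e \mapsto 1}(A) \to (0,\infty)$ be continuous and affine, and fix $\e \in (0,\min_i g(\gamma_i))$. Applying the previous sentence to $f := g|_{\eb T(A)}$ produces $a$ with $|d_{\gamma_i}(a) - g(\gamma_i)| < \e$ for each $i$, so the numbers $d_{\gamma_i}(a)$ are finite and positive. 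Since $\iota(a)$ is affine on $T_{e \mapsto 1}(A)$ — this is part of the identification $\Lsc(T(A),(0,\infty]) \cong \LAff(T_{e \mapsto 1}(A),(0,\infty])$ recorded before Proposition~\ref{ContinuousDenseSuffices}, and also follows from the monotone-limit formula for $d_\tau$ — at every $\tau = \sum_i \lambda_i \gamma_i$ it equals $\sum_i \lambda_i d_{\gamma_i}(a)$, a convex combination of finite positive reals; hence $\iota(a)$ is a continuous function with values in $(0,\infty)$, and by the isometry of the first paragraph $\|\iota(a) - g\|_{T_{e \mapsto 1}(A)} = \max_i |d_{\gamma_i}(a) - g(\gamma_i)| < \e$. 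Thus the range of $\iota$ contains a uniformly dense subset of the continuous affine functions $T_{e \mapsto 1}(A) \to (0,\infty)$.

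Proposition~\ref{ContinuousDenseSuffices} then gives $\Lsc(T(A),(0,\infty]) \subseteq \iota(\Cu(A) \setminus V(A)) \subseteq \operatorname{range}(\iota)$. Together with $\iota(0) = 0$ and the fact that, $A$ being simple, $d_\tau(a) > 0$ for all $\tau \in T_{e \mapsto 1}(A)$ whenever $a \neq 0$ (nonzero lower semicontinuous traces on a simple $\mathrm{C}^*$-algebra are faithful), this identifies $\operatorname{range}(\iota)$ as exactly $\{0\} \cup \Lsc(T(A),(0,\infty])$, i.e.\ $\iota$ is onto. There is no deep obstacle; the two points needing care rather than effort are: first, that $\iota(a)$ is genuinely affine on the trace space, since this is precisely what upgrades the pointwise-on-$\eb T(A)$ estimate of Proposition~\ref{TotallyDisconnected} to a uniform estimate on all of $T_{e \mapsto 1}(A)$, and it is here that finiteness of $\eb T(A)$ (via Proposition~\ref{TChoquet}) is indispensable; and second, that the proof of Proposition~\ref{ContinuousDenseSuffices} uses separability of $A$ only to know that $T(A)$ is metrizable, which holds automatically once $\eb T(A)$ is finite, so that no separability hypothesis is required in the corollary.
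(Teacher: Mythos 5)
Your argument is correct and follows the paper's route exactly: the paper's proof of this corollary is simply to combine Propositions \ref{ContinuousDenseSuffices} and \ref{TotallyDisconnected}, and your write-up supplies the (implicit) bridge that, since $T_{e\mapsto 1}(A)$ is a finite-dimensional simplex and $\iota(a)$ is affine, the vertex estimates from Proposition \ref{TotallyDisconnected} upgrade to uniform estimates on continuous affine functions, as required by Proposition \ref{ContinuousDenseSuffices}. Your remarks on metrizability replacing separability and on strict positivity at the extreme points are accurate refinements of the same approach, not a different one.
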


\begin{proof}
This follows from Propositions \ref{ContinuousDenseSuffices} and \ref{TotallyDisconnected}.
\end{proof}

\section{The radius of divisibility}
\alabel{RODSect}

Abstracting the technique in the proof of \ccite{Theorem 3.4}{Toms:rigidity}, we introduce an invariant of the Cuntz semigroup called the radius of divisibility.
This name is inspired by the fact (shown in Proposition \ref{RankDimROD}) that it shares roughly the same relationship to the matricial-to-topological dimension of a recursive subhomogeneous algebra as does the radius of comparison, as defined in \ccite{Definition 4.1}{Toms:comparison}.
In fact, much as the radius of comparison has been used to show that simple approximately subhomogeneous algebras with slow dimension growth have strict comparison in their Cuntz semigroups, the radius of divisibility will be used in Corollary \ref{SDGranks} to show that the Cuntz semigroups of such algebras are also almost divisible.

Although we can phrase the following definition for general $e$, it is probably only useful in the case that $e$ is full and in the Pedersen ideal of $A$.

\begin{defn}
Let $A$ be a $\mathrm{C}^*$-algebra and let $e \in A_+$.
The \textbf{radius of density} of $A$ with respect to $e$ is the infimum of real numbers $r > 0$ such that, for any continuous linear function $f:T(A) \to (0,\infty)$, there exists $a \in (A \tens \K)_+$ such that for all $\tau \in T(A)$,
\[ |d_\tau(a) - f(\tau)| \leq r d_\tau(e). \]
We denote this quantity by $\rod(A,e)$.
\end{defn}

For a Choquet simplex $C$, let us denote by $\Aff(C)$ the set of continuous affine maps $C$ to $\R$, and by $\Aff(C)_{++}$ the subset of $\Aff(C)$ whose range is contained in $(0,\infty)$.

\begin{lemma}
\alabel{AffDensity}
Let
\[ C_1 \labelledleftarrow{\phi_2^1} C_2 \labelledleftarrow{\phi_3^2} \cdots \]
be an inverse sequence of Choquet simplices whose inverse limit is $C$.
Then
\begin{enumerate}
\item $\bigcup_{i=1}^\infty \Aff(C_i) \circ \phi_\infty^i$ is uniformly dense in $\Aff(C)$; and
\item $\bigcup_{i=1}^\infty \Aff(C_i)_{++} \circ \phi_\infty^i$ is uniformly dense in $\Aff(C)_{++}$.
\end{enumerate}
\end{lemma}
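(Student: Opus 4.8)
The plan is to establish (i) first—uniform density of $\bigcup_i \Aff(C_i)\circ\phi_\infty^i$ in $\Aff(C)$—and then deduce (ii) by a perturbation/positivity argument. For (i) the natural route is to identify $\Aff(C)$ with the limit of the system $\Aff(C_1)\to\Aff(C_2)\to\cdots$ under the maps $f\mapsto f\circ\phi_{i+1}^i$. Concretely, since $C=\varprojlim C_i$, a point of $C$ is a compatible sequence $(x_i)$ with $x_i\in C_i$ and $\phi_{i+1}^i(x_{i+1})=x_i$, and $\phi_\infty^i\colon C\to C_i$ is the $i$-th coordinate projection. The union $\mathcal D:=\bigcup_i \Aff(C_i)\circ\phi_\infty^i$ is a linear subspace of $\Aff(C)$ that contains the constants and separates points of $C$: if $(x_i)\neq(y_i)$ then $x_j\neq y_j$ for some $j$, and some $g\in\Aff(C_j)$ separates $x_j,y_j$, so $g\circ\phi_\infty^j$ separates the two points. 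Thus $\mathcal D$ is a point-separating, constant-containing linear subspace of $\Aff(C)$; by the Stone–Weierstrass theorem for affine functions on a compact convex set (the affine version: a closed subspace of $\Aff(C)$ containing constants and separating points is all of $\Aff(C)$), $\mathcal D$ is uniformly dense. One should be a little careful that $C$ is indeed compact (an inverse limit of compact convex sets is compact convex) and metrizable is not needed here; the affine Stone–Weierstrass statement for general compact convex $C$ is standard (see e.g.\ Alfsen, or Goodearl's book).

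For (ii), given $f\in\Aff(C)_{++}$, compactness of $C$ gives $\delta>0$ with $f\geq 2\delta$ on $C$. By (i) choose $i$ and $g\in\Aff(C_i)$ with $\|g\circ\phi_\infty^i-f\|_\infty<\delta$. Then $g\circ\phi_\infty^i\geq\delta>0$ on $\phi_\infty^i(C)$, but we need $g$ itself to be strictly positive on all of $C_i$, i.e.\ $g\in\Aff(C_i)_{++}$. This is where the only genuine subtlety lies: $\phi_\infty^i(C)$ need not be all of $C_i$, so $g$ could fail to be positive off $\phi_\infty^i(C)$. The fix is to note that $\phi_\infty^i(C)=\bigcap_{j\geq i}\phi_j^i(C_j)$ is a \emph{decreasing} intersection of compact convex sets, hence for the fixed $g$, which is continuous, the sets $\{x\in C_i: g(x)\leq 0\}\cap\phi_j^i(C_j)$ form a decreasing family of compacta with empty intersection (their intersection meets $\phi_\infty^i(C)$, where $g\geq\delta>0$); by the finite intersection property, $g>0$ already on $\phi_{j_0}^i(C_{j_0})$ for some $j_0\geq i$. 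Replacing $i$ by $j_0$ and $g$ by $g\circ\phi_{j_0}^i\in\Aff(C_{j_0})$, we have $g\circ\phi_\infty^{j_0}=g\circ\phi_{j_0}^i\circ\phi_\infty^{j_0}$ is the same function, now with $g\in\Aff(C_{j_0})_{++}$ and still within $\delta$ of $f$. Hence (ii) follows.

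The main obstacle, then, is precisely that last positivity upgrade: uniform closeness of $g\circ\phi_\infty^i$ to $f$ only controls $g$ on the image $\phi_\infty^i(C)$, and one must push forward along the tower to a stage where $g$ becomes positive everywhere on the relevant simplex. The compactness argument via the finite intersection property handles this cleanly, but it is the one place where the inverse-limit structure (as opposed to an abstract dense subspace argument) is really used. Everything else—the affine Stone–Weierstrass input, the identification of $C$ as a compact convex inverse limit, the reduction $f\geq 2\delta$—is routine. I would also remark that since each $C_i$ is a Choquet simplex, so is $C$, though this fact is not needed for the proof of the lemma itself.
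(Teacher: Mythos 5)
Your proof is correct and takes essentially the same approach as the paper: part (i) is the standard density fact (which the paper simply quotes as well known), and for (ii) your positivity upgrade from $\phi_\infty^i(C)$ to some finite stage $C_{j_0}$ is exactly the paper's compactness argument, merely packaged via the standard identity $\phi_\infty^i(C)=\bigcap_{j\geq i}\phi_j^i(C_j)$ and the finite intersection property rather than an explicit cluster point in $\prod_k C_k$. No gaps.
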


\begin{proof}
(i) is well-known. 

(ii): This is essentially contained in the proof of \ccite{Theorem 3.4}{Toms:rigidity}; however, for clarity, we will provide an explicit proof here.
By (i), it suffices to show that if $f \in \Aff(C_i) \circ \phi_\infty^i \cap \Aff(C)_{++}$ then $f \in \Aff(C_j)_{++} \circ \phi_\infty^j$ for some $j$.
Let $g \in \Aff(C_i)$ be such that $f= g \circ \phi_\infty^i$; we will in fact show that $g \circ \phi_j^i \in \Aff(C_j)_{++}$ for some $j \geq i$.
For a contradiction, suppose that this is false; that is, that for each $j \geq i$, there exists $x_j \in C_j$ such that $g(\phi_j^i(x_j)) \leq 0$.
Then for $j \geq i$, set
\[ \gamma_j = (\gamma_j^{(k)})_{k \geq i} := (\phi_j^i(x_j),\phi_j^{i+1}(x_j),\dots,\phi_j^{j-1}(x_j),x_j,x_{j+1},x_{j+2},\dots) \in \prod_{k=i}^\infty C_k. \]
By compactness of $\prod_{k=i}^\infty C_k$, let $\gamma = (\gamma^{(k)}) \in \prod_{k=i}^\infty C_k$ be a cluster point of the sequence $(\gamma_j)$.
Since $\gamma_j^{(k+1)} \circ \phi_{k+1}^k = \gamma_j^{(k)}$ for all $k \leq j$, it follows that $\gamma^{(k+1)} \circ \phi_{k+1}^k = \gamma^{(k)}$ for all $k$; that is, $(\gamma^{(k)})$ defines a point $\gamma$ in $C$.
However,
\[ f(\gamma) = g(\phi_\infty^i(\gamma)) \leq \limsup g(\phi_j^i(x_j)) \leq 0; \]
which is a contradiction.
\end{proof}

\begin{prop}
\alabel{IndLimitROD}
Let
\[ A_1 \labelledrightarrow{\phi_1^2} A_2 \labelledrightarrow{\phi_2^3} \cdots \]
be an inductive sequence of $\mathrm{C}^*$-algebras whose limit is $A$, such that the maps $\phi_i^j$ are injective and full.
Suppose $e_1$ is a full element in the Pedersen ideal of $A_1$, and set $e=\phi_1^\infty(e_1)$.
Then
\begin{equation}
\alabel{IndLimitROD-Eq}
 \rod(A,e) \leq \liminf \rod(A_i,\phi_1^i(e_i)).
\end{equation}
\end{prop}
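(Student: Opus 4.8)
The plan is to approximate an arbitrary continuous linear function on $T(A)$ by one pulled back from some $A_i$, apply the definition of $\rod(A_i,\phi_1^i(e_1))$ inside that $A_i$, and then push the resulting positive operator forward to $A$.

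First I would dispose of some preliminaries. Write $e_i=\phi_1^i(e_1)$. Since $d_\tau(\lambda e_1)=d_\tau(e_1)$ for $\lambda>0$, rescaling $e_1$ affects neither side of \eqref{IndLimitROD-Eq}, so I may assume $\|e_1\|\le1$, hence $\|e_i\|\le1$ and $\|e\|\le1$. Because each $\phi_i^j$ is full, so is $\phi_1^\infty$ (the closed ideal generated by $\phi_1^\infty(A_1)$ contains each $\phi_i^\infty(A_i)$, hence is dense, hence equals $A$), and therefore $e=\phi_1^\infty(e_1)$ is full. Using the description of the Pedersen ideal from the proof of \ccite{Theorem 5.6.1}{Pedersen:CstarBook}, write $e_1\le\sum_k f_k(x_k)$ with $x_k\in(A_1)_+$ and $f_k\in C_c((0,\infty))_+$; then $e\le\sum_k f_k(\phi_1^\infty(x_k))\in\mathrm{Ped}(A)$, and since $\mathrm{Ped}(A)$ is hereditary, $e\in\mathrm{Ped}(A)$. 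The same reasoning shows each $e_i$ is full and lies in $\mathrm{Ped}(A_i)$, so Proposition \ref{TChoquet} applies and $T_{e\mapsto1}(A)$ and the $T_{e_i\mapsto1}(A_i)$ are compact Choquet simplices. Since $\phi_i^j(e_i)=e_j$ and $\phi_i^j(\mathrm{Ped}(A_i))\subseteq\mathrm{Ped}(A_j)$, and since pulling back a densely finite trace along $\phi_i^j$ gives a trace still finite on $e_i$ and hence (Lemma \ref{densefinite}) densely finite, pullback restricts to continuous affine maps $(\phi_i^j)^*\colon T_{e_j\mapsto1}(A_j)\to T_{e_i\mapsto1}(A_i)$ and $(\phi_i^\infty)^*\colon T_{e\mapsto1}(A)\to T_{e_i\mapsto1}(A_i)$.

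The step I expect to be the main obstacle is to show that these maps exhibit $T_{e\mapsto1}(A)$ as the inverse limit $\varprojlim_i T_{e_i\mapsto1}(A_i)$ of compact Choquet simplices. A point of the latter is a coherent family of lower semicontinuous traces $\tau_i$ on $A_i$ with $\tau_i(e_i)=1$; by the continuity of the cone-of-lower-semicontinuous-traces functor established in \cite{ElliottRobertSantiago} such a family corresponds to a lower semicontinuous trace on $A$ finite and nonzero on $e$, which is densely finite by Lemma \ref{densefinite}, i.e.\ an element of $T_{e\mapsto1}(A)$; one further checks, using Proposition \ref{TChoquet}(ii) and its proof, that the topology of \ccite{Section 3.2}{ElliottRobertSantiago} restricts to the inverse-limit topology. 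Granting this, Lemma \ref{AffDensity}(ii) gives that $\bigcup_i\Aff(T_{e_i\mapsto1}(A_i))_{++}\circ(\phi_i^\infty)^*$ is uniformly dense in $\Aff(T_{e\mapsto1}(A))_{++}$.

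With the density in hand the rest is bookkeeping. Fix $r>\liminf_i\rod(A_i,e_i)$, choose $r'$ with $\liminf_i\rod(A_i,e_i)<r'<r$, and put $\e=r-r'$; infinitely many $i$ satisfy $\rod(A_i,e_i)<r'$. Given a continuous linear $f\colon T(A)\to(0,\infty)$, regarded as an element of $\Aff(T_{e\mapsto1}(A))_{++}$, the density statement yields $i_0$ and $g\in\Aff(T_{e_{i_0}\mapsto1}(A_{i_0}))_{++}$ with $\|f-g\circ(\phi_{i_0}^\infty)^*\|<\e$; choosing $j\ge i_0$ with $\rod(A_j,e_j)<r'$ and setting $g'=g\circ(\phi_{i_0}^j)^*\in\Aff(T_{e_j\mapsto1}(A_j))_{++}$ we still have $\|f-g'\circ(\phi_j^\infty)^*\|<\e$. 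The definition of $\rod(A_j,e_j)<r'$, applied to $g'$, produces $a\in(A_j\tens\K)_+$ with $|d_\tau(a)-g'(\tau)|\le r'\,d_\tau(e_j)$ for all $\tau\in T_{e_j\mapsto1}(A_j)$. Set $a'=(\phi_j^\infty\tens\mathrm{id}_\K)(a)$. For $\sigma\in T_{e\mapsto1}(A)$, writing $\tau=\sigma\circ\phi_j^\infty\in T_{e_j\mapsto1}(A_j)$, one has $d_\sigma(a')=d_\tau(a)$ and $d_\sigma(e)=d_\tau(e_j)$ because $\phi_j^\infty$ commutes with continuous functional calculus and $\phi_j^\infty(e_j)=e$, so
\[ |d_\sigma(a')-f(\sigma)|\le|d_\tau(a)-g'(\tau)|+|g'(\tau)-f(\sigma)|\le r'\,d_\sigma(e)+\e\le r\,d_\sigma(e), \]
the last inequality holding since $d_\sigma(e)\ge\sigma(e)=1$ (as $\|e\|\le1$). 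As both sides are linear over the cone $T(A)$, this gives $|d_\tau(a')-f(\tau)|\le r\,d_\tau(e)$ for all $\tau\in T(A)$; since $f$ was arbitrary, $\rod(A,e)\le r$, and letting $r\downarrow\liminf_i\rod(A_i,e_i)$ yields \eqref{IndLimitROD-Eq}. Apart from the inverse-limit identification of the trace simplices flagged above, every step is a routine manipulation with the Pedersen ideal, pullbacks of traces, functional calculus, and Lemma \ref{AffDensity}.
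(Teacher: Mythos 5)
Your proposal is correct and follows essentially the same route as the paper: identify $T_{e\mapsto1}(A)$ with $\varprojlim T_{e_i\mapsto1}(A_i)$ (which the paper also only asserts as well-known), apply Lemma \ref{AffDensity}(ii) to approximate $f$ by a function pulled back from some $A_i$ with $\rod(A_i,e_i)$ close to the liminf, and push the resulting positive element forward. Your version is in fact slightly more careful than the paper's, e.g.\ in normalizing $\|e_1\|\le1$ so that the error $\e$ is absorbed into $r\,d_\sigma(e)$, and in checking fullness and Pedersen-ideal membership of $e$ in $A$.
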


\begin{proof}
Set $e_i := \phi_1^i(e_1)$ for all $i$.

Let $r := \liminf\, \rod(A_i,e_i)$, and suppose that $r' > r$.
Let $\eta = (r'-r)/2$.
Let $f \in \Aff(T_{e \mapsto 1}(A))_{++}$.
We note that $T_{e \mapsto 1}(A) = \varprojlim T_{e_i \mapsto 1}(A)$ (this is well-known in the unital case, and no tricks are needed to adapt the proof to the nonunital situation).
It therefore follows from Lemma \ref{AffDensity} (ii) that for all $i$ sufficiently large, there exists $g \in \Aff(T_{e_i \mapsto 1}(A_i))$ such that $\|g \circ \phi_\infty^i -f\| < \eta$.
In particular, we may find such $g \in \Aff(T_{e_i \mapsto 1}(A_i))$ for some $i$ for which $\rod(A_i,e_i) < r+\eta$.
This means that we may find $\hat a \in (A_i \tens \K)_+$ such that
\[ |d_\tau(\hat a) - g(\tau)| \leq r+\eta \]
for all $\tau \in T_{e_i \mapsto 1}(A_i)$.
Thus, with $a = \phi_i^\infty(\hat a) \in A$, we have
\[ |d_\tau(a) - f(\tau)| \leq |d_{\tau \circ \phi_\infty^i}(\hat a) - g(\tau)| + |g \circ \phi_i^\infty(\tau)-f(\tau)\| \leq r+\eta + \eta = r'. \]
Since $r' > r$ was arbitrary, this shows that $\rod(A) \leq r$ as required.
\end{proof}

\section{A computation of the Cuntz semigroup}
\alabel{CuCompSect}
In Theorem \ref{AUDcomputation}, we shall show that when $A$ is simple and exact, $\Cu(A)$ is almost unperforated, and the range of $\iota$ is uniformly dense, then $\Cu(A)$ can in fact be explicitly described purely in terms of the cone of traces $T(A)$ paired with the Murray-von Neumann semigroup $V(A)$.
Succinctly, we show that $\Cu(A)$ has the form described in \ccite{Corollary 6.8}{ElliottRobertSantiago}.
This sort of computation isn't particularly new --- such Cuntz semigroup computations were pioneered by Brown, Perera, and the second-named author in \cite{BrownPereraToms}, although with more hypotheses on $A$ including that it is unital.

The following preliminary is in order (again, slight weakenings of this can already be found in the literature).

\begin{lemma}\alabel{UnperfComp2}
Let $A$ be a simple $\mathrm{C}^*$-algebra.
Then $\Cu(A)$ is almost unperforated if and only if, for $[a],[b] \in \Cu(A)$, if $f([a]) < f([b])$ for every lower semicontinuous dimension function $f$ for which $f([b]) < \infty$ then $[a] \leq [b]$.
\end{lemma}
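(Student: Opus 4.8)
The plan is to prove the two implications of the equivalence separately. The implication ``the comparison property $\Rightarrow$ $\Cu(A)$ almost unperforated'' is short, so I would do it first; the reverse implication is a nonunital refinement of R\o{}rdam's characterisation of almost unperforation and carries all the content. (Throughout, a \emph{dimension function} is understood to be nonzero, without which the right-hand side of the equivalence would be vacuously true.)

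For the easy implication, suppose $(k+1)[x]\le k[y]$ in $\Cu(A)$; I must produce $[x]\le[y]$. If $[y]=0$ then $[x]=0\le[y]$, so assume $[y]\ne 0$; by simplicity a representative of $[y]$ is then full, so $\sup_n n[y]$ is the largest element of $\Cu(A)$ by Lemma \ref{full}. Let $f$ be any nonzero lower semicontinuous dimension function with $f([y])<\infty$. Then $f([y])>0$, since otherwise $f(\sup_n n[y])=\sup_n nf([y])=0$ would force $f$ to vanish on the largest element of $\Cu(A)$, hence identically. Applying $f$ to $(k+1)[x]\le k[y]$ and using $0<f([y])<\infty$ gives $f([x])\le\tfrac{k}{k+1}f([y])<f([y])$. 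As $f$ was arbitrary, the comparison hypothesis yields $[x]\le[y]$.

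For the converse, assume $\Cu(A)$ is almost unperforated and that $f([a])<f([b])$ for every nonzero lower semicontinuous dimension function $f$ with $f([b])<\infty$; I must show $[a]\le[b]$. If $[a]=0$ there is nothing to prove, and if $[b]=0$ the left-hand hypothesis fails (it would demand $f([a])<0$ for the functionals available under the standing hypotheses), so I may assume $[a],[b]\ne 0$ and, for a contradiction, $[a]\not\le[b]$. Fix a representative $a\in(A\tens\K)_+$; since $[a]=\sup_{\e>0}[(a-\e)_+]$ with each $[(a-\e)_+]\ll[a]$, there is $\e>0$ with $a_0:=(a-\e)_+\ne 0$ and $[a_0]\not\le[b]$. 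By simplicity $[a_0]$ and $[b]$ are full, so $[a_0]\ll[a]\le\infty[b]$ and hence $[a_0]\le n[b]$ for some $n$; and since $\Cu(A)$ is almost unperforated, $[a_0]\not\le[b]$ forces $(k+1)[a_0]\not\le k[b]$ for every $k\ge 1$, equivalently $q\ge p$ whenever $p[a_0]\le q[b]$ with $p,q\in\mathbb N$.

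From this I would manufacture a nonzero lower semicontinuous dimension function $f$ with $f([b])<\infty$ and $f([a_0])\ge f([b])$; since $f([a])\ge f([a_0])$, this contradicts the hypothesis and finishes the argument. This is R\o{}rdam's separation argument from the unital case, with the essential modification that one normalises against $[b]$ --- which is legitimate here precisely because $b$ is full, by simplicity --- rather than against a unit. Concretely: on the submonoid of $\Cu(A)$ consisting of classes dominated by some multiple of $[b]$, on which $[b]$ is an order unit and which contains $[a_0]$, one takes (via the Goodearl--Handelman machinery, whose state space here is nonempty and compact) a state $f$ with $f([b])=1$; the relation ``$p[a_0]\le q[b]\Rightarrow q\ge p$'' says exactly that $\sup\{f([a_0]):f\ \text{a state}\}=\inf\{q/p:p[a_0]\le q[b]\}\ge 1$, a supremum attained by compactness, so some such $f$ has $f([a_0])\ge 1=f([b])$; finally one promotes $f$ to a densely finite $2$-quasitrace --- a trace, by exactness --- and hence to a functional $d_\tau$ on all of $\Cu(A)$, still with $d_\tau([b])=1<\infty$ (cf.\ Blackadar--Handelman, Elliott--Robert--Santiago). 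The main obstacle is this construction, and specifically the finiteness clause $f([b])<\infty$: in the absence of a unit one is forced to normalise at $[b]$ itself, and one must check that a \emph{finite} state normalised there exists, which is where fullness of $[b]$ (hence simplicity, via Proposition \ref{CompactPrim}) and, implicitly, stable finiteness of $A$ are used. Once the dimension function is in hand the contradiction is immediate; the degenerate possibilities excluded at the outset ($[a]=0$, and $A$ with no nonzero functional) are trivial or do not arise.
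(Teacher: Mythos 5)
Your easy direction is fine, and your reduction in the hard direction (pass to $a_0=(a-\e)_+$ with $[a_0]\ll[a]$ and $[a_0]\not\le[b]$, note $[a_0]\le n[b]$ via Lemma \ref{full}, and use almost unperforation to get a Goodearl--Handelman state $f$, normalized at the order unit $[b]$ of the relevant subsemigroup, with $f([a_0])\ge 1=f([b])$) is essentially R{\o}rdam's separation argument and is sound in outline. The genuine gap is the decisive last step, where you ``promote $f$ to a densely finite $2$-quasitrace --- a trace, by exactness --- and hence to a functional $d_\tau$ on all of $\Cu(A)$''. Exactness is not a hypothesis of this lemma (only simplicity is), but more importantly the Blackadar--Handelman/Elliott--Robert--Santiago correspondence is between \emph{lower semicontinuous} dimension functions (functionals on $\Cu(A)$) and lower semicontinuous $2$-quasitraces: a Goodearl--Handelman state is merely additive and order-preserving, need not preserve suprema of increasing sequences, and need not come from any (quasi)trace, so it cannot simply be ``promoted'' while retaining its values. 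Since the hypothesis of the lemma quantifies only over lower semicontinuous dimension functions, this is precisely the obstacle the whole proof must overcome; it is not, as you suggest, the normalization at $[b]$ or stable finiteness (R{\o}rdam's argument is purely order-theoretic, with $b$ itself as the normalizing element, so nothing nonunital-specific arises there).

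The repair is one sentence away from your setup: regularize the state by $\bar f([x]):=\sup_{[x']\ll[x]}f([x'])$, which by \ccite{Lemma 4.7}{ElliottRobertSantiago} is a lower semicontinuous dimension function; then $\bar f([b])\le f([b])=1<\infty$, while $[a_0]\ll[a]$ gives $\bar f([a])\ge f([a_0])\ge 1\ge\bar f([b])$, and $\bar f\neq 0$, contradicting the hypothesis. This regularization-plus-$\ll$ device is exactly what the paper's proof uses, except that the paper runs the argument directly rather than by contradiction: it treats \ccite{Proposition 3.2}{Rordam:Z} as a black box for the state-separation step (so the Goodearl--Handelman construction you sketch never has to be redone), shows for an arbitrary dimension function $f$ and any $[a]\ll[c]$ that $f([a])\le\bar f([c])<\bar f([d])\le f([d])$, concludes $[a]\le[d]$ from R{\o}rdam's proposition, and finishes using $[c]=\sup\{[a]:[a]\ll[c]\}$. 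With the regularization step inserted, your by-contradiction version would also be a correct (if more self-contained) proof.
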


\begin{proof}
By \ccite{Proposition 3.2}{Rordam:Z}, $\Leftarrow$ is automatic.
Let us assume that $\Cu(A)$ is almost unperforated.
If $[c],[d] \in \Cu(A)$ are such that $f([c]) < f([d])$ holds for every lower semicontinuous dimension function, then we must show that $[c] \leq [d]$.
If we knew that $f([c]) < f([d])$ for the non-lower semicontinuous dimension functions, then \ccite{Proposition 3.2}{Rordam:Z} would show that $[c] \leq [d]$; the rest of the proof overcomes this obstacle.

Given any dimension function $f:\Cu(A) \to [0,\infty]$, we may define $\bar{f}:\Cu(A) \to [0,\infty]$ by
\[ \bar{f}([x]) = \sup_{[x'] \ll [x]} f([x']). \]
Then by \ccite{Lemma 4.7}{ElliottRobertSantiago}, $\bar{f}$ is a lower semicontinuous dimension function on $\Cu(A)$.

For $[a] \ll [c]$, we have
\[ f([a]) \leq \bar{f}([c]) < \bar{f}([d]) \leq f([d]). \]
(The first and last inequalities are evident from the definition of $\bar{f}$ while the middle one is by hypothesis.)
Therefore, by \ccite{Proposition 3.2}{Rordam:Z}, $[a] \leq [d]$.
But since $[c]$ is the supremum of $[a]$ satisfying $[a] \ll [c]$, we must have $[c] \leq [d]$, as required.
\end{proof}

In the following, we view $V(A) \dunion \Lsc(T(A),(0,\infty])$ as an ordered abelian semigroup as follows.
$V(A)$ and $\Lsc(T(A),(0,\infty])$ are already ordered semigroups (with pointwise $\leq$ giving the ordering on the latter), and we insist that their embeddings maintain the order and semigroup structures.
For $[p] \in V(A)$ and $f \in \Lsc(T(A),(0,\infty])$, we set $[p]+f:= g \in \Lsc(T(A),(0,\infty])$ given by $g(\tau) = \tau(p)+f(\tau)$; $[p] \leq f$ if and only if $\tau(p) < f(\tau)$ for all $\tau \in T(A)$, while $f \leq [p]$ if and only if $f(\tau) \leq \tau(p)$ for all $\tau \in T(A)$.

\begin{thm}
\alabel{AUDcomputation}
Let $A$ be a simple, exact $\mathrm{C}^*$-algebra such that $\Cu(A)$ is almost unperforated and the range of $\iota$ is uniformly dense.
Then $\Cu(A)$ is isomorphic, as an ordered semigroup, to $V(A) \dunion \Lsc(T(A),(0,\infty])$.
The isomorphism sends $[a] \in \Cu(A)$ to $[p] \in V(A)$ if $[a] = [p]$ for some projection $p \in (A \tens \K)_+$, and to the function $\iota(a)$ otherwise.
\end{thm}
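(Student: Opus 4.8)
The plan is to define $\Phi\colon \Cu(A) \to V(A) \dunion \Lsc(T(A),(0,\infty])$ by the stated recipe and to check that it is a bijective, order-preserving semigroup homomorphism with order-preserving inverse (this realizes the form of \ccite{Corollary 6.8}{ElliottRobertSantiago}). Well-definedness is routine: whether a Cuntz class contains a projection depends only on the class, any two projections in one class are Murray--von Neumann equivalent so their common image in $V(A)$ is unambiguous, and $\iota$ factors through Cuntz equivalence. One also checks that for $[a]\notin V(A)$ the function $\iota(a)$ really lands in $\Lsc(T(A),(0,\infty])$: it is lower semicontinuous and affine, and it is strictly positive because a nonzero lower semicontinuous trace on the simple algebra $A$ is faithful, so $d_\tau(a)=0$ would force $a=0$ and hence $[a]\in V(A)$.

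Two facts will carry the argument. First: every lower semicontinuous dimension function $f$ on $\Cu(A)$ for which $f([b])<\infty$ for some $b\neq 0$ has the form $d_\tau$ for a \emph{densely finite} trace $\tau$ --- being a functional on $\Cu(A)$ it is $d_\tau$ for a $2$-quasitrace by \cite{BlackadarHandelman}, that $\tau$ is a trace since $A$ is exact, and since $d_\tau(b)<\infty$ with $b\neq 0$ the two-sided ideal of $A$ on which $\tau$ is finite is nonzero, hence (by simplicity) dense, and so contains the Pedersen ideal. Second: if $[a]\notin V(A)$ then $0$ is a non-isolated point of $\sigma(a)$, so for every $\epsilon>0$ there is a nonzero $h\in(A\tens\K)_+$ with $h\perp(a-\epsilon)_+$ and $[(a-\epsilon)_+]+[h]\leq[a]$ (take $h$ a nonzero continuous function of $a$ supported in $(0,\epsilon)$); hence for every nonzero $\tau\in T(A)$ with $d_\tau(a)<\infty$ one gets $d_\tau((a-\epsilon)_+)\leq d_\tau(a)-d_\tau(h)<d_\tau(a)$, faithfulness of $\tau$ giving $d_\tau(h)>0$.

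Comparison then reduces to Lemma~\ref{UnperfComp2}. For injectivity, if $[a],[b]\notin V(A)$ and $\iota(a)=\iota(b)$, then for each $\epsilon>0$ and each lower semicontinuous dimension function $f=d_\tau$ with $f([b])<\infty$ --- so, by the first fact, $\tau$ is a nonzero densely finite trace and $d_\tau(a)=d_\tau(b)<\infty$ --- the second fact gives $d_\tau((a-\epsilon)_+)<d_\tau(b)$; Lemma~\ref{UnperfComp2} yields $[(a-\epsilon)_+]\leq[b]$, and letting $\epsilon\to 0$ gives $[a]\leq[b]$, hence $[a]=[b]$ by symmetry. Classes with a projection representative are already separated in $V(A)$, and projection classes and purely positive classes land in different summands, so $\Phi$ is injective; surjectivity onto $V(A)$ is trivial, and surjectivity onto $\Lsc(T(A),(0,\infty])$ --- with the preimages taken outside $V(A)$ --- is exactly Proposition~\ref{ContinuousDenseSuffices}. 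Each of the four cases of the order statement (according to the types of $[a]$ and $[b]$) is handled the same way: ``$\Rightarrow$'' is monotonicity of $d_\tau$, augmented when $[a]\in V(A)$ and $[b]\notin V(A)$ by using that the compact element $[a]$ lies below some $[(b-\epsilon)_+]$ to produce the strict inequality $\tau(a)<d_\tau(b)$ demanded by the order on $V(A)\dunion\Lsc(T(A),(0,\infty])$; ``$\Leftarrow$'' reruns the Lemma~\ref{UnperfComp2} argument above, using that a densely finite trace is finite on every projection of $A\tens\K$ (as $A$ has a full element in its Pedersen ideal). For additivity, a sum of two projection classes is a projection class with the expected image, and if $[a]\notin V(A)$ and $[c]\neq 0$ then $[a]+[c]\notin V(A)$: otherwise $[a]+[c]$ would equal a projection class, which being compact would equal $[(a-\epsilon)_+]+[c]$ for some $\epsilon$, contradicting the second fact upon applying a densely finite trace of finite value; the image formulas then match by the definitions of the operations on $V(A)\dunion\Lsc(T(A),(0,\infty])$.

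The hard part will be the injectivity of $\Phi$ on purely positive elements, i.e.\ recovering $[a]$ from the rank function $\iota(a)$. This is where almost unperforation is indispensable, via Lemma~\ref{UnperfComp2}, and it must be fed two pieces of bookkeeping: that the dimension functions relevant to that lemma are rank functions of densely finite traces (so the hypothesis $\iota(a)=\iota(b)$, which concerns only $T(A)$, can be brought to bear), and that chopping a purely positive element strictly lowers every finite nonzero value of its rank function (so that equality of rank functions upgrades to the strict inequality the lemma requires). The same two ingredients, in the same combination, power the order and additivity statements, so establishing them cleanly is the crux of the whole proof.
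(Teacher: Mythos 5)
There is a genuine gap at the surjectivity step. You dispose of surjectivity onto $\Lsc(T(A),(0,\infty])$ by saying it ``is exactly Proposition~\ref{ContinuousDenseSuffices}'', but that proposition requires the range of $\iota$ to \emph{contain} a uniformly dense set of \emph{continuous} affine functions on $T_{e \mapsto 1}(A)$; its proof genuinely uses continuity of the chosen elements $g_n$ of that set, both to find $\delta_{n+1}>0$ with $f_{n+1}-\delta_{n+1}>g_1+\cdots+g_n$ on the compact base and to approximate $f_{n+1}-(g_1+\cdots+g_n)$ again by a member of the dense set. The theorem's density hypothesis, as it is actually verified in the intended application (Corollary~\ref{SDGranks}, via the radius of divisibility), is weaker: every $f\in\Aff(T_{e\mapsto1}(A),(0,\infty))$ can be uniformly approximated by rank functions $\iota(b)$, but these approximants are in general only lower semicontinuous, so you do not know that the continuous members of the range form a dense set --- conceivably there are none. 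The missing step, which occupies the second half of the paper's proof, is to show that each such $f$ is \emph{exactly} realized: choose $\epsilon_1>\epsilon_2>\cdots\to0$ with $f>\epsilon_1$, use density to pick $[b_n]$ with $\iota(b_n)$ pinched in the band $(f-\epsilon_n,\,f-\epsilon_{n+1})$ on $T_{e\mapsto1}(A)$, note that the strict separation of consecutive bands together with Lemma~\ref{UnperfComp2} (so almost unperforation enters a second time, not only for injectivity) makes $([b_n])$ increasing in $\Cu(A)$, and check that its supremum $[b]$ satisfies $\iota(b)=f$. Only after this does Proposition~\ref{ContinuousDenseSuffices} apply, with the dense set taken to be all of $\Aff(T_{e\mapsto1}(A),(0,\infty))$. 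As written, your argument proves the theorem only under the strictly stronger hypothesis that the range of $\iota$ already contains a dense set of continuous functions, which is not what the applications supply.

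Apart from this, your treatment of the order embedding is essentially the paper's: where the paper invokes \ccite{Proposition 2.2}{PereraToms:recasting} to get a strict drop in rank, you use the explicit orthogonal bump $h$ supported near $0$ in the spectrum and the cut-downs $(a-\epsilon)_+$, and you use compactness of projection classes for the case $[a]\in V(A)$, $[b]\notin V(A)$; your ``first fact'' (lower semicontinuous dimension functions finite on a nonzero class come from densely finite traces, via \cite{BlackadarHandelman}, exactness and simplicity) is exactly the bookkeeping the paper leaves tacit when it feeds pointwise inequalities over $T(A)$ into Lemma~\ref{UnperfComp2}, and your explicit additivity check goes beyond what the paper writes out. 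So the one substantive defect is the surjectivity step; relatedly, identifying injectivity as the sole ``hard part'' misses that realizing continuous affine functions exactly as rank functions is where the density hypothesis and almost unperforation must interact.
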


\begin{proof}
The statement of the proposition implicitly defines a map
\[ \Phi:\Cu(A) \to V(A) \dunion \Lsc(T(A),(0,\infty]). \]
Let us first verify that $\Phi$ is an order embedding, i.e.\ that $[a] \leq [b]$ if and only if $\Phi([a]) \leq \Phi([b])$.
This will require only that $\Cu(A)$ is almost unperforated.
Four different cases need to be checked, depending on whether or not each of $[a],[b]$ is in $V(A)$.

It is trivial if both are in $V(A)$.
By using Proposition \ref{UnperfComp2}, we obtain the ``if'' direction when $[a] \in V(A)$.
However, if $[a] \in V(A)$ and $[a] < [b]$ then by \ccite{Proposition 2.2}{PereraToms:recasting}, there exists a nonzero $[x]$ such that $[a] + [x] \leq [b]$.
Since $A$ is simple, $d_\tau(x) > 0$ and so $\widehat{[a]}(\tau) < \widehat{[b]}(\tau)$ for all $\tau$.

The ``only if'' direction is automatic if $[a] \not\in V(A)$.
On the other hand, if $[a] \not\in V(A)$ and $\widehat{[a]} \leq \widehat{[b]}$ pointwise then, again using \ccite{Proposition 2.2}{PereraToms:recasting} and simplicity of $A$, we have
\[ \widehat{[a']}(\tau) < \widehat{[a]}(\tau) \leq \widehat{[b]}(\tau) \]
for all $\tau$, and therefore by Proposition \ref{UnperfComp2}, $[a'] \leq [b]$.
Since $[a]$ is the supremum of $[a'] \ll [a]$, we have $[a] \leq [b]$.
This concludes the verification that $\Phi$ is an order embedding.

Now we will show that $\Phi$ is surjective.
Obviously, $V(A)$ is in the range of $\Phi$.
We shall therefore show that $\Lsc(T(A),(0,\infty])$ is contained in the range of $\Phi$.
By Proposition \ref{ContinuousDenseSuffices}, it suffices to show that the range of $\iota$ contains $\Aff(T_{e \mapsto 1}(A), (0,\infty))$, as we shall now do.

Namely, given $f \in \Aff(T_{e \mapsto 1}(A), (0,\infty))$, let $\e_1 > \e_2 > \cdots$, converging to $0$, such that $f(\tau) > \e_1$ for all $n$.
Since the range of $\iota$ is dense, we may find $[b_n] \in \Cu(A)$ such that
\[ \iota(b_n)(\tau) \in (f(\tau)-\e_n, f(\tau)-\e_{n+1}) \]
for all $\tau \in T_{e \mapsto 1}(A)$.
By Proposition \ref{UnperfComp2}, $([b_n])$ is an increasing sequence, and its supremum $[b]$ clearly satisfies $\iota(b) = f$, as required.
\end{proof}

\section{Simple approximately subhomogeneous algebras with slow dimension growth}
\alabel{ASHSect}

The following result is in all likelihood true without the assumption of a compact primitive ideal space, but we do not require this generality for the applications we have in mind.  

\begin{prop}
\alabel{RankDimROD}
Let $A$ be subhomogeneous with $\Prim(A)$ compact, and let $e$ be a full, positive element in the Pedersen ideal of $A$.  It follows that
\begin{equation}
\alabel{RankDimROD-Ineq}
\rod(A,e) \leq 16 R_{(d+4):r}(e) := 16\sup_{\pi \text{ irred.\ rep.}} \frac{d_{top}(\pi)+4}{\rank \pi(e)} \\
\end{equation}
\end{prop}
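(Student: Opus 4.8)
The plan is to reduce to the case where $A$ is recursive subhomogeneous, translate the estimate into a statement about approximating continuous functions on $\Prim(A)$ by rank functions of positive elements, and then induct on the length of a recursive subhomogeneous decomposition, extracting the numerology from the bundle theory over the topological pieces. This abstracts the argument in the proof of \ccite{Theorem 3.4}{Toms:rigidity}.

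\emph{Reduction.} If the supremum on the right of \eqref{RankDimROD-Ineq} is infinite there is nothing to prove, so we may assume it is finite, whence the topological dimensions $d_{\mathrm{top}}(\pi)$ are bounded by some $d < \infty$. By the structure theory of subhomogeneous $C^*$-algebras we may assume $A$ is recursive subhomogeneous, with a decomposition exhibiting quotients $M_{n_i}(C(X_i))$, $i = 0,\dots,\ell$, over compact metrizable spaces $X_i$ of covering dimension $\le d$, glued along closed subspaces $X_i^{(0)} \subseteq X_i$ via (unital) homomorphisms $\phi_i$. By Proposition \ref{TChoquet}, $T_{e \mapsto 1}(A)$ is a Choquet simplex, and a densely finite trace on $A$ is integration of the unnormalized matricial trace against a finite measure $\mu$ on $\Prim(A)$; by monotone convergence $d_\tau(a) = \int \rank \pi(a)\, d\mu(\pi)$ for $a \in (A \tens \K)_+$, and a continuous linear $f \colon T(A) \to (0,\infty)$ is integration against a continuous function $\bar f$ on $\Prim(A)$. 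Hence it suffices to produce, for each such $\bar f$, an $a \in (A \tens \K)_+$ with
\[ |\rank \pi(a) - \bar f(\pi)| \leq 16 R_{(d+4):r}(e)\,\rank \pi(e) \]
for every irreducible representation $\pi$; integrating against $\mu$ then gives $|d_\tau(a) - f(\tau)| \le 16 R_{(d+4):r}(e)\, d_\tau(e)$ for all $\tau$, so that $\rod(A,e) \le 16 R_{(d+4):r}(e)$. Note that $16 R_{(d+4):r}(e)\rank \pi(e)$ dominates $16(d_{\mathrm{top}}(\pi) + 4)$ by definition of $R_{(d+4):r}(e)$; the point of phrasing the target relative to $\rank \pi(e)$ rather than as the absolute bound $16(d_{\mathrm{top}}(\pi)+4)$ is that both the error and $\rank \pi(e)$ are additive over direct sums of representations, so this form of the estimate is exactly what survives the gluing homomorphisms $\phi_i$ (each of which decomposes a boundary evaluation into a sum of representations living on earlier pieces).

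\emph{The induction.} We induct on $\ell$. For $\ell = 0$ we have $A \tens \K \cong M_{n_0}(C(X_0, \K))$, and the task becomes the finite-dimensional statement: over a compact metrizable space $X$ of dimension $\le d$, every continuous $g \colon X \to (0,\infty)$ is the rank function of a positive element of $C(X,\K)$ up to a uniform error $\le 16(d+4)$ (which is at most $16 R_{(d+4):r}(e)\rank\pi(e)$ since $\rank\pi(e)\ge1$ and $d_{\mathrm{top}}(\pi)\le d$). For the inductive step write $A = B \dsum_{M_{n_\ell}(C(X_\ell^{(0)}))} M_{n_\ell}(C(X_\ell))$ with $B$ recursive subhomogeneous of length $\ell-1$, and apply the inductive hypothesis to $B$ (with the restriction of $\bar f$) to get $a_B \in (B \tens \K)_+$. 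The element $a$ we seek is forced on $X_\ell^{(0)}$ to equal $\phi_\ell(a_B)$, whose rank there is within $16 R_{(d+4):r}(e)\,\rank\pi(e)$ of $\bar f$ by the additivity remark above; it remains to extend this boundary datum across $X_\ell$ so that the rank stays within $16 R_{(d+4):r}(e)\,\rank\pi(e) \ge 16(\dim X_\ell + 4)$ of $\bar f$ on all of $X_\ell$ — a \emph{relative} version of the finite-dimensional statement.

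\emph{The finite-dimensional input and the main obstacle.} Both the absolute and the relative finite-dimensional statements rest on bundle theory over a base of dimension $\le d$: a positive element of rank $k$ ``is'' a rank-$k$ subbundle of a trivial bundle over its open support, and over a $d$-dimensional space one has enough room — via general-position and stabilization arguments — to extend, modify and interpolate such bundles while changing ranks only in controlled steps bounded in terms of $d$; the absolute constants $16$ and $4$ are pinned down here (as in \ccite{Theorem 3.4}{Toms:rigidity}), accounting for rounding $\bar f$ to integer values, passing between projections and positive elements, and the direct-sum decompositions used to realize varying rank. The main obstacle is the relative (extension) case together with the bookkeeping near the gluing locus: a priori the fresh ``bundle slack'' needed over $X_\ell \setminus X_\ell^{(0)}$ could, when added to the error already inherited on $X_\ell^{(0)}$, exceed the budget $16 R_{(d+4):r}(e)\,\rank\pi(e)$. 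The resolution is to perform the extension so that near $X_\ell^{(0)}$ the element merely continues its pinned boundary values — so the error there is purely the inherited one, which is already within budget — while the fresh slack is introduced only in the interior of $X_\ell \setminus X_\ell^{(0)}$, where there is no inherited error and the full budget $16(\dim X_\ell+4)$ is available; continuity of $\bar f$ and lower semicontinuity of $\rank\pi(e)$ are used to control the transition region. Crucially this shows the error does not accumulate with $\ell$: the error introduced at stage $\ell$ on the new piece $X_\ell$ depends only on $\dim X_\ell$, not on $\ell$ or on the matricial sizes $n_i$. Tracking these estimates through the recursion yields \eqref{RankDimROD-Ineq}.
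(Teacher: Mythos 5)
Your proposal follows essentially the same route as the paper: reduce the $\rod(A,e)$ estimate to a pointwise rank-approximation statement at the extreme traces, i.e.\ over the blocks of a recursive subhomogeneous decomposition, with the error budget measured against $\rank\pi(e)$ precisely so that it is additive over the clutching maps, and then the element is produced by the induction over the decomposition that constitutes the proof of \ccite{Theorem 3.4}{Toms:rigidity}, to which both you and the paper defer for the bundle-extension lemmas and the constant $16(d_{top}(\pi)+4)$. One small correction: the error permitted on a block $X_i$ is the local quantity $16(\dim X_i+4)$, not the global $16(d+4)$ (your base-case parenthetical ``since $\rank\pi(e)\ge 1$ and $d_{top}(\pi)\le d$'' only gives $16R_{(d+4):r}(e)\rank\pi(e)\ge 16(d_{top}(\pi)+4)$), but this is exactly the blockwise bound the construction yields anyway.
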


\begin{proof}
This proof is contained in the proof of \ccite{Theorem 3.4}{Toms:rigidity}.  
We shall explain exactly how, since the statement of \ccite{Theorem 3.4}{Toms:rigidity} neither makes reference to the radius of divisibility, nor handles the nonunital case.
Set $r$ to be the right-hand side of \eqref{RankDimROD-Ineq}  for convenience, and let a continuous linear function $f:T(A) \to (0,\infty)$ be given.  By Proposition \ref{TChoquet} we have that $T_{e \mapsto 1}(A)$ is  a compact base for the space of densely defined lower semicontinuous traces.  We therefore need only prove that there exists $a \in (A \tens \K)_+$ such that
\[
 |d_\tau(a) - f(\tau)| \leq r, \ \forall \tau \in T_{e \mapsto 1}(A),
 \]
and, of course, it in fact suffices to show this only for extreme points of $T_{e \mapsto 1}(A)$.

As long as $r$ is finite, it follows from \ccite{Corollary 3.3}{ProjlessReg} that $A$ is a recursive subhomogeneous algebra, so let us consider it to be equipped with a recursive subhomogeneous decomposition. 
For $1 \leq i \leq n$, let $\mathrm{M}_{n_i}(X_i)$ be the $i^{\mathrm{th}}$ matrix block of $A$, $X_i^{(0)}$ the $i^{\mathrm{th}}$ clutching space, $A_i$ the $i^{\mathrm{th}}$ stage algebra, and $\phi_i:A_i \to \mathrm{M}_{n_i}$ the $i^{\mathrm{th}}$ clutching homomorphism.  The irreducible representations of $A$ correspond to evaluating an element of $A$ at a point
 \[
 x \in X^{(1)} := \bigcup_{i=1}^l X_i \backslash X_i^{(0)}.
 \]
 Let's denote such a representation by $\pi_x$, and let $\mathrm{Tr}$ denote the canonical normalized trace on $\mathrm{M}_n$ (for any $n$).   The extreme points of $T_{e \mapsto 1}(A)$ are all multiples of $\tau_x := \mathrm{Tr} \circ \pi_x$; therefore, we must verify that
 \[
 |d_{\tau_x}(a) - f(\tau)| \leq r d_{\tau_x}(e), \ \forall  x \in X^{(1)}.
 \]
 We note here that $d_{\tau_x}(b) = \mathrm{rank}(\pi_x(b))/n_i = \mathrm{rank}(b(x))/n_i$ for any $b \in A_+$ and $x \in X_i$.   We can therefore finally characterize our requirement in terms of ranks of positive operators:
 \[
 \left| \frac{\mathrm{rank}(a(x))}{n_i} - f(\tau_x) \right| \leq r \frac{\mathrm{rank}(e(x))}{n_i} = 16  \frac{\mathrm{dim}(X_i) + 4}{n_i}.
 \]
 The existence of such an $a$ now follows verbatim from the proof of Theorem 3.4 of \cite{Toms:rigidity}, beginning at the bottom of page 239.
\end{proof}

\begin{cor}
\alabel{SDGranks}
If $A$ is a simple, nonelementary, approximately subhomogeneous algebra with slow dimension growth then $\iota$ is surjective, and $\Cu(A)$ is as described in Theorem \ref{AUDcomputation}
\end{cor}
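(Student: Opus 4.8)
The plan is to produce a full positive element $e$ in the Pedersen ideal of $A$ with $\rod(A,e)=0$, and then to check that $A$ meets the hypotheses of Theorem \ref{AUDcomputation}. First I would write $A=\varinjlim(A_i,\phi_i)$ with each $A_i$ a recursive subhomogeneous algebra with compact primitive ideal space --- so that $A_i$ is unital and $\mathrm{Ped}(A_i)=A_i$ --- the decomposition chosen to witness slow dimension growth; after passing to a subsequence and replacing each $A_i$ by $\phi_i^\infty(A_i)$, I may assume that all connecting maps $\phi_i^j$ are injective and full. Fixing a full positive $e_1\in A_1$ and putting $e:=\phi_1^\infty(e_1)$, this $e$ is full and lies in the Pedersen ideal of $A$.

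Next I would bound $\rod(A_i,\phi_1^i(e_1))$ using Proposition \ref{RankDimROD}:
\[ \rod(A_i,\phi_1^i(e_1))\ \le\ 16\sup_{\pi}\frac{d_{top}(\pi)+4}{\rank\pi(\phi_1^i(e_1))}, \]
the supremum being over irreducible representations $\pi$ of $A_i$. The goal is to show the right-hand side tends to $0$. For this I would use that, $A$ being simple and nonelementary, the system can be chosen so that $\rank\pi(\phi_1^i(e_1))\ge c\cdot(\dim\pi)$ for a fixed constant $c>0$, every $i$, and every irreducible $\pi$ of $A_i$ --- i.e.\ so that the connecting maps are uniformly close to unital. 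Granting this, and writing $d(i),n(i)$ for the largest topological dimension and smallest matrix size appearing at stage $i$, slow dimension growth (which also forces $n(i)\to\infty$) gives $\rod(A_i,\phi_1^i(e_1))\le\frac{16}{c}(\frac{d(i)}{n(i)}+\frac{4}{n(i)})\to 0$. By Proposition \ref{IndLimitROD}, $\rod(A,e)\le\liminf_i\rod(A_i,\phi_1^i(e_1))=0$.

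It then remains to assemble the hypotheses of Theorem \ref{AUDcomputation}. The algebra $A$ is simple by assumption and exact (indeed nuclear, as an inductive limit of type I algebras), and $\Cu(A)$ is almost unperforated since simple approximately subhomogeneous algebras of slow dimension growth have strict comparison (by \cite{Toms:comparison,Toms:rigidity}, with the same nonunital adaptation as in Proposition \ref{RankDimROD}). Finally, $\rod(A,e)=0$ yields that the range of $\iota$ is uniformly dense: given continuous affine $f\colon T_{e\mapsto 1}(A)\to(0,\infty)$ and $\delta>0$, the definition of $\rod$ provides $a$ with $|d_\tau(a)-f(\tau)|\le\delta\,d_\tau(e)$ for all $\tau$, and since $e$ lies in the Pedersen ideal, $\tau\mapsto d_\tau(e)$ is continuous --- hence bounded --- on the compact simplex $T_{e\mapsto 1}(A)$ (Proposition \ref{TChoquet}), so $\iota(a)$ approximates $f$ uniformly. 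Theorem \ref{AUDcomputation} then gives $\Cu(A)\cong V(A)\dunion\Lsc(T(A),(0,\infty])$; the surjectivity of $\iota$ onto $\Lsc(T(A),(0,\infty])$ is contained in this (and can also be read off from the proof of that theorem, where Proposition \ref{ContinuousDenseSuffices} is applied once the relevant density is in hand).

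The main obstacle is the rank estimate $\rank\pi(\phi_1^i(e_1))\ge c\cdot(\dim\pi)$. When $A$ is unital this is free: take the $\phi_i$ unital and $e_1=1_{A_1}$, so that $\phi_1^i(e_1)=1_{A_i}$ and $\rank\pi(\phi_1^i(e_1))=\dim\pi$. When $A$ is nonunital one cannot do this, and must instead exploit simplicity of $A$ to keep the ``defect from unitality'' of the connecting maps uniformly small relative to the matrix block sizes as one moves up the system; making this precise is the technical heart of the matter, and is the nonunital counterpart of the analysis underlying \ccite{Theorem 3.4}{Toms:rigidity}.
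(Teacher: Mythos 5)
Your overall skeleton matches the paper's: bound $\rod(A_i,\phi_1^i(e_1))$ via Proposition \ref{RankDimROD}, pass to the limit with Proposition \ref{IndLimitROD}, obtain almost unperforation from the literature, and conclude with Theorem \ref{AUDcomputation}. The genuine gap is exactly the step you flag as ``the technical heart of the matter'' and leave unproved: the claim that the system can be arranged so that $\rank\pi(\phi_1^i(e_1))\ge c\cdot\dim\pi$ for a uniform $c>0$ (connecting maps ``uniformly close to unital''). Without it your argument does not show that the right-hand side of \eqref{RankDimROD-Ineq} tends to zero along the system, which is the whole content of the density statement; and in the stably projectionless setting there is no reason such a normalization should be available (the algebra may have no projections at all, and the fibre ranks of a fixed full element need not be comparable to the matrix sizes). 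The detour is also unnecessary: the nonunital formulation of slow dimension growth (as used here, cf.\ \cite{ProjlessReg}) is stated relative to the ranks of images of a fixed full element, so it directly yields $\sup_\pi d_{top}(\pi)/\rank\pi(\phi_1^i(e_1))\to 0$ with no comparison to matrix sizes, while the additive constant $+4$ in \eqref{RankDimROD-Ineq} is absorbed by the standard fact that simplicity and nonelementarity force $\min_\pi\rank\pi(\phi_1^i(e_1))\to\infty$. This is how the paper argues, and with that observation the corollary follows from Propositions \ref{RankDimROD} and \ref{IndLimitROD} without your unproved rank estimate.

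Two smaller points. For almost unperforation you invoke \cite{Toms:comparison,Toms:rigidity} ``with the same nonunital adaptation,'' which is again an adaptation you do not supply; the paper cites \ccite{Corollary 5.9}{ProjlessReg}, which covers the nonunital case directly, and you should too. Also, $\tau\mapsto d_\tau(e)$ is only lower semicontinuous, not continuous, even for $e$ in the Pedersen ideal; what you actually need in order to turn the $\rod$ estimate into uniform density is an upper bound for $d_\tau(e)$ on the compact base $T_{e\mapsto 1}(A)$. Such a bound does hold (dominate $e$ by a finite sum of elements $f_i(x_i)$ with $f_i\in C_c((0,\infty))$ and use compactness of $T_{e\mapsto 1}(A)$ together with continuity of the pairing with Pedersen-ideal elements), but not for the reason you give.
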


\begin{proof}
The range of $\iota$ is dense by Propositions \ref{IndLimitROD} and \ref{RankDimROD}, together with noting that simplicity and nonelementarity implies that $\min \rank \pi(\phi_1^\infty(a)) \to \infty$ for every nonzero $\pi$.
We have almost unperforation by \ccite{Corollary 5.9}{ProjlessReg}.
The conclusion follows from Theorem \ref{AUDcomputation}.
\end{proof}

\end{document}